\numberwithin{equation}{section}
\newcommand{\hamiltonian}{{\bf h}}
\newcommand{\potential}{{\bf v}}
\newcommand{\dispersion}{\mathfrak{e}}
\newcommand{\low}{\mathfrak{l}}
\newcommand{\up}{\mathfrak{u}}
\newcommand{\singpart}{\mathfrak{j}}
\newcommand{\p}{\partial}
\renewcommand{\epsilon}{\varepsilon}
\renewcommand{\S}{\mathbb{S}}
\renewcommand{\hat}{\widehat}
\newcommand{\C}{\mathbb{C}}
\newcommand{\R}{\mathbb{R}}
\newcommand{\T}{\mathbb{T}}
\newcommand{\cH}{\mathcal{H}}
\newcommand{\Z}{\mathbb{Z}}
\newcommand{\N}{\mathbb{N}}
\newcommand{\cF}{\mathcal{F}}
\newcommand{\disc}{{\ensuremath{\mathrm{disc}}}}
\newcommand{\ess}{{\ensuremath{\mathrm{ess}}}}
\newcommand{\no}{\nonumber}
\numberwithin{equation}{section}
\theoremstyle{plain}
\newtheorem{theorem}{Theorem}[section]
\newtheorem{proposition}[theorem]{Proposition}
\newtheorem{lemma}[theorem]{Lemma}
\theoremstyle{definition}
\newtheorem{remark}[theorem]{Remark}
\date{\today}
\begin{document}
\title[]{Expansion of eigenvalues of rank-one perturbations  of the discrete bilaplacian}

\author[A. Khalkhuzhaev] {Ahmad Khalkhuzhaev} 
\address[Ahmad Khalkhuzhaev]{Samarkand State University\\ 
University boulevard 3\\ 140104 Samarkand (Uzbekistan)}
\email[A. Khalkhuzhaev]{ahmad\_x@mail.ru}

\author[Sh. Kholmatov] {Shokhrukh Yu. Kholmatov} 
\address[Shokhrukh Yu. Kholmatov]{University of Vienna\\ Oskar-Morgenstern Platz 1\\1090 Vienna  (Austria)}
\email[Sh. Kholmatov]{shokhrukh.kholmatov@univie.ac.at}

\author[M. Pardabaev] {Mardon  Pardabaev} 
\address[Mardon Pardabaev]{Samarkand State University\\ 
University boulevard 3\\ 140104 Samarkand (Uzbekistan)}
\email[M. Pardabaev]{p\_mardon75@mail.ru}

\begin{abstract}
We consider the family 
$$
\hat \hamiltonian_\mu:=\hat\varDelta\hat \varDelta - \mu \hat \potential,\qquad\mu\in\R,
$$ 
of discrete Schr\"odinger-type operators in $d$-dimensional lattice $\Z^d$, 
where $\hat \varDelta$ is the discrete Laplacian and $\hat\potential$ 
is of rank-one.  We prove that there exist coupling constant thresholds $\mu_o,\mu^o\ge0$ such that for any $\mu\in[-\mu^o,\mu_o]$ the discrete spectrum of $\hat \hamiltonian_\mu$ is empty and for any $\mu\in \R\setminus[-\mu^o,\mu_o]$ the discrete spectrum of $\hat\hamiltonian_\mu$ is a singleton $\{e(\mu)\},$ and  $e(\mu)<0$ for $\mu>\mu_o$ and  $e(\mu)>4d^2$ for $\mu<-\mu^o.$ Moreover, we study the asymptotics of $e(\mu)$ as $\mu\to\mu_o$ and $\mu\to -\mu^o$ as well as $\mu\to\pm\infty.$ The asymptotics highly depend  on $d$ and $\hat\potential.$ 
\end{abstract}

\subjclass[2010]{47A10,47A55,47A75,41A60}

\keywords{Discrete bilaplacian, essential spectrum, discrete spectrum, eigenvalues, asymptotics, expansion }

\maketitle

\section{Introduction}
Fourth order elliptic operators in $\R^d$ in particular, the biharmonic operator, play a central role in a wide class of physical models such as linear elasticity theory,  rigidity problems (for instance, construction of suspension bridges) and in streamfunction formulation of Stokes flows  (see e.g. \cite{DKV:2018,MZ:2016,MW:1987} and references therein). Moreover, recent investigations have shown that the Laplace and biharmonic operators have high potential in image compression with the optimized and sufficiently sparse stored data \cite{HPW:2015}.  
The need for corresponding numerical simulations has led to a vast literature devoted to a variety of discrete approximations to the solutions of fourth order equations \cite{BK:2019,GHKW:2018,Tee:1963}. The question of stability of such models is basically related to their spectral properties and therefore, numerous studies have been dedicated to the numerical evaluation of the eigenvalues \cite{AP:1986,Bou:2003,RB:2013}. 

In this paper we investigate the spectral properties of the discrete biharmonic operator $\hat \varDelta\hat\varDelta$ perturbed by a rank-one potential $\hat\potential,$ i.e. 
$$
\hat \hamiltonian_\mu:=\hat\varDelta\hat \varDelta - \mu \hat \potential 
$$
in the $d$-dimensional cubical lattice $\Z^d,$ where $\mu\in\R$ is a coupling constant.   
This model includes also a discrete Schr\"odinger operator on $\Z^d,$ associated to a system of one particle whose dispersion relation has a degenerate bottom. 
Moreover, in view of the momentum representation  \eqref{hamilt_momentum} below, $\hat\hamiltonian_\mu$ can also be considered as a Friedrichs model in $L^2(\T^d)$ with degenerate bottom, where $\T^d$ is the $d$-dimensional torus.
We recall that spectral theory of discrete Schr\"odinger operators and Friedrichs models with non-degenerate bottom in particular, with discrete Laplacian, have been extensively studied in recent years (see e.g. \cite{ALM:04:Puan,ALMM:2006,GSch:97,LKL:2012,LH:2011,Mat,Mog} and references therein) because of their applications in the theory of ultracold atoms in optical lattices \cite{JBC:1998,LSA:2012,Wall:2015,Wink:2006}. In these models the appearance of weakly coupled bound states has been sufficiently well-explored and necessary and sufficient conditions for the existence of discrete spectrum in terms of the coupling constant have been established. This conditions naturally leads to the coupling constant threshold phenomenon \cite{KS:1980}: consider $-\Delta +\lambda V$ with $V$ short range  at a value $\lambda_0,$ where some eigenvalue $e(\lambda)$ is absorbed into the continuous spectrum as $\lambda\searrow \lambda_0,$ and conversely, for any $\epsilon>0,$ as $\lambda\nearrow \lambda_0+\epsilon$ the continous spectrum gives birth to a new eigenvalue. This phenomenon and the absorbtion  rate of eigenvalues as $\lambda\to\lambda_0$ have been already established for discrete Schr\"odinger operators with non-degenerate bottom in \cite{LKL:2012,LH:2011,LH:2012}. 

In the case of Schr\"odinger operators with degenerate bottom some sufficient conditions for existence of bound states have been recently observed in \cite{HHV:2018}. However, to the best of our knowledge, except the case $d=1$ in \cite{HP:2019},  the results related to the asymptotics of eigenvalues as coupling constant approaches to the threshold have not been published yet.

The threshold phenomenon is also closely related to the low-energy and high-energy resonances, and hence, to the Efimov effect \cite{ALM:04:Puan,BT:2017,Gr:2014,Lak:1993,NE:2017,Sob:1993,Tam:1991,Yaf:1974}: {\it if any two-body subsystem in a three-body system has no bound state below its essential spectrum and at least two two-body subsystem has a zero-energy resonance, then the corresponding  three-body system has infinitely many bound states whose energies accumulate at the lower edge of the three-body essential spectrum}. 
In fact, it is well-known that the two-body resonances can be observed exactly at the coupling constant thresholds (see e.g. \cite{ALM:04:Puan}). In the continuous case, the Efimov effect may appear only for certain attractive systems of particles \cite{NE:2017}, and in the discrete case an analogous result has been established for attractive three-particle system in three dimensional lattice.  Recent experimental results in the theory of ultracold atoms in an optical lattice have shown that two-particle systems can have repulsive  bound states and resonances (see e.g. \cite{Wink:2006}), thus, one expects the Efimov effect to hold also for some repulsive three-particle systems.

In the current paper we study the spectrum of $\hat \hamiltonian_\mu$ and its dependence on the coupling constant $\mu$ and on the perturbation $\hat \potential.$ For simplicity we assume the generator $\hat v$ of $\hat\potential$ to decay exponentially at infinity, however, we urge that our methods can also be adjusted to less regular cases (see Remark \ref{rem:comments}). 
In view of the momentum representation \eqref{hamilt_momentum}, the discrete bilaplacian is unitarily equivalent to the multiplication operator in $L^2(\T^d)$ by the analytic function 
$
\dispersion(p):=(\sum\limits_{i=1}^d (1-\cos p_i))^2 
$ 
so that the discrete bilaplacian has only the essential spectrum filling in the segment $[0,4d^2],$ and hence, by the compactness of  $\hat \potential$ and Weyl's Theorem, $\sigma_\disc(\hat \hamiltonian_\mu)=[0,4d^2]$ independently of $\mu.$ Moreover, the rank-one property of $\hat \potential$ implies that  for any $\mu\in\R$ there is at most one eigenvalue $e(\mu)$ of $\hat \hamiltonian_\mu $ outside its essential spectrum. Depending on the sign of $\mu,$ this eigenvalue can appear either below or above the essential spectrum, hence, we have two related threshold phenomena (see Theorem \ref{teo:existence_disc_spec}): the essential spectrum does not give birth to a new eigenvalue while $\mu$ runs in the interval $[-\mu^o,\mu_o],$ where  
\begin{equation}\label{muo_va_muo}
\mu_o:= \Big(\int_{\T^d} \frac{|v(q)|^2dq}{\dispersion(q)}\Big)^{-1}\ge0,\qquad \mu^o:=   \Big(\int_{\T^d} \frac{|v(q)|^2dq}{4d^2 - \dispersion(q)}\Big)^{-1}\ge0, 
\end{equation}
and $v(q)$ is the Fourier image of the the generator $\hat v$ of $\hat\potential,$   and as soon as $\mu$ leaves this interval through $\mu_o$ resp. through $-\mu^o,$ a negative resp. positive eigenvalue $e(\mu)$ comes out of the essential spectrum.  Conversely, if $\mu$ decreases to $\mu_o$ resp. increases to $-\mu^o,$ this unique eigenvalue $e(\mu)$ is absorbed by the essential spectrum, therefore, $-\mu^o$ and $\mu_o$ are the coupling constant thresholds. 

Now we are interested in the absorption rate of $e(\mu)$ as $\mu\to\mu_o$ and $\mu\to-\mu^o.$ In view of \eqref{muo_va_muo}, $\mu_o$ and $\mu^o$ depend not only on the dimension $d$ of the lattice, but also values $v(0)$ and $v(\vec\pi)$ of $v$ at the minimum and maximum points $0\in\T^d$ and $\vec\pi\in\T^d$ of $\dispersion:$  because of the singularity relaxation of $\frac{|v(q)|^2}{\dispersion(q)}$ resp. $\frac{|v(q)|^2}{4d^2 - \dispersion(q)},$ given $d\ge1$ the asymptotics of $e(\mu)$ with $v(0)=0$ resp. $v(\vec\pi)=0$ may completely differ from the case $v(0)\ne0$ resp. $v(\vec\pi)\ne 0.$ 
Furthermore, observing that the top $\dispersion(\vec\pi)=4d^2$ of the essential spectrum is non-degenerate, one expects the asymptotics of $e(\mu)$ as $\mu\to-\mu^o$ to be similar as in the discrete Laplacian case \cite{LKL:2012,LH:2011}; more precisely, depending on $d$ and the multiplicity $2n^o$ of $\vec\pi\in\{v=0\}$ (if $v(\vec\pi)=0$), 
$e(\mu)$ has a convergent expansion 
\begin{itemize}
\item[--]  in $\mu+\mu^o$ for $2n^o+d=1,3;$ 

\item[--] in $(\mu+\mu^o)^{1/2}$ for $2n^o+d\ge5$ with $d$ odd;

\item[--] in $\mu+\mu^o$ and $e^{-1/(\mu+\mu^o)}$ for $2n_o+d=2;$

\item[--] in  $\mu+\mu^o,$ $-\frac{1}{\ln(\mu+\mu^o)}$ and $-\frac{\ln\ln(\mu+\mu^o)^{-1}}{\ln(\mu+\mu^o)}$ for $2n_o+d=4;$

\item[--] in  $\mu+\mu^o$ and $-(\mu+\mu^o)\ln(\mu+\mu^o)$ for $2n^o+d\ge6$ with $d$ even 
\end{itemize}
(see Theorem \ref{teo:exp_e_atmax}). Moreover, the virtual states of energy $4d^2$, i.e. non-zero solutions $f$ of $\hat\hamiltonian_{-\mu^o}f=4d^2f$ not belonging to $\ell^2(\Z^d)$ appear  if and only if $2n_o+d=3,4.$ 
 However, the bottom $\dispersion(0)=0$ of the essential spectrum is degenerate, therefore, the asymptotics of $e(\mu)$ changes substantially. More precisely,  depending on $d$ and the multiplicity $2n_o$ of $0\in\{v=0\}$ (if $v(0)=0$),  $e(\mu)$ has a convergent expansion 
\begin{itemize}
\item[--]  in $(\mu-\mu_o)^{1/3}$ for $2n_o+d=1,7;$ 

\item[--]  in $\mu-\mu_o$ for $2n_o+d=3,5;$ 

\item[--]  in $(\mu-\mu_o)^{1/4}$ for $2n_o+d\ge9$ with $d$ odd; 

\item[--] in $\mu- \mu_o$ and $-(\mu- \mu_o)\ln(\mu- \mu_o)$ for $2n_o+d=2,6;$

\item[--] in $\mu-\mu_o$ and $e^{-1/(\mu-\mu_o)}$ for $2n_o+d=4;$

\item[--] in  $(\mu-\mu_o)^{1/2},$ $-(\mu-\mu_o)\ln(\mu-\mu_o),$ $(-\frac{1}{\ln(\mu-\mu_o)})^{1/2}$ and $-\frac{\ln\ln(\mu-\mu_o)^{-1}}{\ln(\mu-\mu_o)}$ for $2n_o+d=8;$

\item[--] in  $(\mu-\mu_o)^{1/2}$ and $-(\mu-\mu_o)^{1/2}\ln(\mu-\mu_o)$ for $2n^o+d\ge10$ with $d$ even 
\end{itemize}
(see Theorem \ref{teo:exp_e_at0}). Moreover, virtual states of $0$-energy, i.e. non-zero solutions of $\hat \hamiltonian_{\mu_o}f=0$ not belonging to $\ell^2(\Z^d),$ appear if and only if $2n_o+d\in\{5,6,7,8\}.$ The emergence of $0$-energy resonances in more lattice dimensions could allow the Efimov effect to be observed in other dimensions than $d=3$ in systems of particles with degenrate dispersion relation.

The paper is organized as follows. In Section \ref{sec:premilinary} 
after introducing some preliminaries we state the main results of the paper.  
In Theorem \ref{teo:existence_disc_spec} we establish necessary and sufficient conditions for non-emptiness of the discrete spectrum of $\hat\hamiltonian_\mu,$ and in case of existence, we study the location and the uniqueness, analiticity, monotonicity and convexity properties of eigenvalues $e(\mu)$ as a function of $\mu.$ In particular, we study the asymptotics of $e(\mu)$   as $\mu\to\mu_o$ and $\mu\to-\mu^o$ as well as $\mu\to\pm\infty.$ As discussed above in Theorems \ref{teo:exp_e_at0} and \ref{teo:exp_e_atmax} we obtain expansions of $e(\mu)$ for small and positive $\mu-\mu_o$ and $\mu+\mu^o$. 
In Section \ref{sec:proof_mains} we prove the main results. The main idea of the proof is to obtain a nonlinear equation $\Delta(\mu;z)=0$ with respect to the eigenvalue $z=e(\mu)$ of $\hat \hamiltonian_\mu$ and then study properties of $\Delta(\mu;z).$ Finally, in appendix Section \ref{sec:asymp_integr} we obtain the asymptotics of certain integrals related to $\Delta(\mu;z)$ which will be used in the proofs of main results.

\section{Preliminary and statement of main results}\label{sec:premilinary}

Let $\Z^d$ be the $d$-dimensional lattice and $\ell^2(\Z^d)$ be the Hilbert space of square-summable functions on $\Z^d.$
Consider the family
$$
\hat \hamiltonian_\mu:=\hat\hamiltonian_0 - \mu\hat\potential,\qquad \mu\ge0,
$$
of self-adjoint bounded discrete Schr\"odinger operators in
$\ell^2(\Z^d).$ Here $\hat \hamiltonian_0$ is discrete bilaplacian, i.e. 
$$
\hat \hamiltonian_0:=\hat \varDelta\hat\varDelta,
$$
where
$$
\hat\varDelta f(x) = \frac12\,\sum\limits_{|s|=1} [f(x) - f(x+s)],\qquad f\in \ell^2(\Z^d),
$$
is the discrete Laplacian
and $\hat \potential$ is a rank-one operator 
$$
\hat \potential \hat f(x) = \hat v(x) \sum\limits_{y\in\Z^d}   {\hat v(y)} \hat f(y),
$$
where $\hat v\in \ell^2(\Z^d)\setminus\{0\}$  
is a real-valued function such that 
\begin{equation}\label{hat_v_ga_shart}
|\hat v(x)|\le Ce^{-a|x|},\qquad x\in\Z^d, 
\end{equation}
for some $C,a>0.$ Notice that under assumption \eqref{hat_v_ga_shart},  the function 
\begin{equation}\label{v_p}
v(p):=\frac{1}{2\pi}\sum\limits_{x\in\Z^d} \hat v(x)e^{ix\cdot p} 
\end{equation}
is analytic on $\T^d.$

Recalling  
$$
\sigma(\hat \varDelta) = \sigma_\ess(\hat \varDelta) =[0,2d] 
$$
from the spectral theory  it follows that 
$$
\sigma(\hat \hamiltonian_0) = \sigma_\ess(\hat \hamiltonian_0) =[0,4d^2],
$$
and hence, by the compactness of $\hat \potential $ and   Weyl's Theorem,
$$
\sigma_\ess(\hat \hamiltonian_\mu) = \sigma_\ess(\hat \hamiltonian_0) =[0,4d^2]
$$
for any $\mu\in\R.$ 

Let   
$$
\cF:\ell^2(\Z^d)\to L^2(\T^d),\qquad \cF\hat f(p) = \frac{1}{(2\pi)^{d/2}}\,\sum\limits_{x\in\Z^d} \hat f(x) e^{ixp} 
$$
be the standard Fourier transform with the inverse 
$$
\cF^{-1}:L^2(\T^d)\to \ell^2(\Z^d),\qquad \cF^{-1}f(x) = \frac{1}{(2\pi)^{d/2}}\,\int_{\T^d} f(p) e^{-ixp}dp. 
$$
The operator $\hamiltonian_\mu = \cF\hat \hamiltonian_\mu\cF^{-1}$ is called the {\it momentum representation} of $\hat\hamiltonian_\mu.$ Note that 
\begin{equation}\label{hamilt_momentum}
\hamiltonian_\mu =\hamiltonian_0 - \mu\potential, 
\end{equation}
where $\hamiltonian_0:= \cF\hat \hamiltonian_0\cF^{-1}$ is the multiplication operator in $L^2(\T^d)$ by 
$\dispersion(\cdot)$ defined in \eqref{dispersion_q},
and since $\hat v$ is even, $\potential: = \cF\hat \potential \cF^{-1}$  is the rank-one integral operator 
$$
\potential f(p) = v(p) \int_{\T^d} \overline{v(q)}f(q)dq,
$$
where $v(\cdot)$ is given by \eqref{v_p}. 

Now we state the main results of the paper.
First we concern with the existence of the discrete spectrum of $\hamiltonian_\mu.$ To this aim let 
$$
\mu_o:= \Big(\int_{\T^d} \frac{|v(q)|^2dq}{\dispersion(q)}\Big)^{-1},\qquad \mu^o:=   \Big(\int_{\T^d} \frac{|v(q)|^2dq}{4d^2 - \dispersion(q)}\Big)^{-1},
$$
where 
\begin{equation}\label{dispersion_q}
\dispersion(q): =\Big(\sum\limits_{i=1}^d ( 1 - \cos q_i)\Big)^2, 
\end{equation}

Note that $\mu_o,\mu^o\ge0.$ 

\begin{theorem} \label{teo:existence_disc_spec}
For any $\mu\in[-\mu^o,\mu_o]$ the discrete spectrum of $\hamiltonian_\mu$ is empty and
for any $\mu\in\R\setminus [-\mu^o,\mu_o]$ $\hamiltonian_\mu$ has a unique eigenvalue $e(\mu)$ outside the essential spectrum $[0,4d^2]$ with the associated eigenfunction 
\begin{equation}\label{eigenfunction}
f_\mu(p) = \frac{v(p)}{\dispersion(p) - e(\mu)}. 
\end{equation}
Moreover, 
if $\mu<-\mu^o$ resp. $\mu>\mu_o,$ then $e(\mu)>4d^2$ resp. $e(\mu)<0,$ the function
$\mu\in\R\setminus[-\mu^o,\mu_o]\mapsto  e(\mu)$ is real-analytic strictly decreasing, convex in $(-\infty,-\mu^o)$ and concave in $(\mu_o,+\infty)$ 
satisfying
\begin{equation}\label{asymptotics_e_chegara}
\lim\limits_{\mu\searrow \mu_o} e(\mu) =0\qquad \text{and}\qquad 
\lim\limits_{\mu\nearrow -\mu^o} e(\mu) =4d^2  
\end{equation}
and 
\begin{equation}\label{asymptotics_e_infty}
\lim\limits_{\mu\to\pm\infty} \frac{e(\mu)}{\mu} = \mp\int_{\T^d}|v(q)|^2dq.
\end{equation}
\end{theorem}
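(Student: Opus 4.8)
The plan is to reduce the eigenvalue problem to a scalar equation and then read off every assertion from the monotonicity and convexity of a single Cauchy-type integral. First I would fix $z\in\R\setminus[0,4d^2]$ and seek $f\in L^2(\T^d)\setminus\{0\}$ with $\hamiltonian_\mu f=zf$. By \eqref{hamilt_momentum} this reads $(\dispersion(p)-z)f(p)=\mu\,v(p)\langle v,f\rangle$, where $\langle v,f\rangle:=\int_{\T^d}\overline{v(q)}f(q)\,dq$, so every eigenfunction is of the form \eqref{eigenfunction}; inserting this back and cancelling $\langle v,f\rangle\ne0$ leaves the characteristic equation $\Delta(\mu;z)=0$, with $\Delta(\mu;z):=1-\mu\,\varphi(z)$ and $\varphi(z):=\int_{\T^d}\frac{|v(q)|^2}{\dispersion(q)-z}\,dq$. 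Since $z\notin[0,4d^2]$ keeps $\dispersion-z$ bounded away from zero while $v$ is bounded, $\varphi(z)$ is finite and $f_\mu\in L^2(\T^d)$; thus the eigenvalues of $\hamiltonian_\mu$ off $[0,4d^2]$ are exactly the zeros of $z\mapsto\Delta(\mu;z)$, and each is simple.

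Next I would analyse $\varphi$ on the two intervals $(-\infty,0)$ and $(4d^2,\infty)$. Differentiating under the integral sign, $\varphi'(z)=\int_{\T^d}\frac{|v(q)|^2}{(\dispersion(q)-z)^2}\,dq>0$, so $\varphi$ is real-analytic and strictly increasing on each. On $(-\infty,0)$ it rises from $\varphi(-\infty)=0$ to $\varphi(0^-)=1/\mu_o$, and on $(4d^2,\infty)$ from $\varphi(4d^2{+})=-1/\mu^o$ to $\varphi(+\infty)=0$ (with the conventions $1/\mu_o=+\infty$ when the integral defining $\mu_o$ diverges, and likewise for $\mu^o$). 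Consequently $\varphi(z)=1/\mu$ has a unique root $z<0$ iff $\mu>\mu_o$, a unique root $z>4d^2$ iff $\mu<-\mu^o$, and no root off $[0,4d^2]$ for $\mu\in[-\mu^o,\mu_o]$. This already yields the emptiness of the discrete spectrum on $[-\mu^o,\mu_o]$, the existence, uniqueness and location of $e(\mu)$ outside it, and — letting $1/\mu\nearrow1/\mu_o$ resp. $1/\mu\searrow-1/\mu^o$ — the threshold limits \eqref{asymptotics_e_chegara}.

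For the regularity of $\mu\mapsto e(\mu)$ I would differentiate the identity $\mu\,\varphi(e(\mu))=1$. Since $\partial_z(\mu\varphi(z)-1)=\mu\varphi'(z)\ne0$ on the relevant region, the analytic implicit function theorem gives real-analyticity, and $e'(\mu)=-\big(\mu^2\varphi'(e(\mu))\big)^{-1}<0$ gives strict monotonicity. The only non-formal step is convexity, which I would route through $F:=1/\varphi$, so that $e=F^{-1}$ on each branch and $e''(\mu)=-F''(e(\mu))/\big(F'(e(\mu))\big)^3$. A short computation gives $F''=\big(2(\varphi')^2-\varphi\,\varphi''\big)/\varphi^3$; writing $\varphi,\varphi',\tfrac12\varphi''$ as the integrals of $t^{-1},t^{-2},t^{-3}$ against $|v|^2\,dq$, where $t:=|\dispersion-z|>0$, the Cauchy–Schwarz inequality yields $2(\varphi')^2\le\varphi\,\varphi''$. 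Hence $\sign F''=-\sign\varphi$, so $F$ is concave on $(-\infty,0)$ and convex on $(4d^2,\infty)$; as $F'=-\varphi'/\varphi^2<0$ on both branches, the inverse-function formula forces $e''\le0$ on $(\mu_o,\infty)$ and $e''\ge0$ on $(-\infty,-\mu^o)$, i.e. the stated concavity and convexity.

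Finally, for $|\mu|\to\infty$ I would use $z\,\varphi(z)=\int_{\T^d}\frac{z}{\dispersion(q)-z}\,|v(q)|^2\,dq\to-\int_{\T^d}|v(q)|^2\,dq$ (dominated convergence, $\dispersion$ bounded), i.e. $\varphi(z)\sim -z^{-1}\int_{\T^d}|v(q)|^2\,dq$; substituting into $\mu\,\varphi(e(\mu))=1$, together with $e(\mu)\to-\infty$ as $\mu\to+\infty$ and $e(\mu)\to+\infty$ as $\mu\to-\infty$, determines the limit of $e(\mu)/\mu$ at $\pm\infty$, which is \eqref{asymptotics_e_infty}. I expect the convexity step to be the main obstacle: the rest is monotone bookkeeping, whereas there the correct sign emerges only after passing to $1/\varphi$ and invoking Cauchy–Schwarz. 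A secondary technical point is the uniform handling of the degenerate thresholds $\mu_o=0$ or $\mu^o=0$, where the defining integrals diverge.
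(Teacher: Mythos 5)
Your proof is correct, and its skeleton coincides with the paper's: reduce the eigenvalue problem to the characteristic equation $\Delta(\mu;z)=1-\mu\varphi(z)=0$ with $\varphi(z)=\int_{\T^d}\frac{|v(q)|^2dq}{\dispersion(q)-z}$, exploit the strict monotonicity of $\varphi$ on $(-\infty,0)$ and $(4d^2,\infty)$ for existence, uniqueness and location of $e(\mu)$, and get analyticity and $e'<0$ from the implicit function theorem. Where you genuinely depart from the paper is the convexity step, and your version is the more solid one. The paper differentiates \eqref{e_mu_der} and states
$$
e''(\mu)=\frac{2e'(\mu)}{\mu}\Big(1-\mu e'(\mu)B\Big),\qquad B:=\int_{\T^d}\frac{|v(q)|^2dq}{(\dispersion(q)-e(\mu))^3}\Big(\int_{\T^d}\frac{|v(q)|^2dq}{(\dispersion(q)-e(\mu))^2}\Big)^{-1},
$$
whose leading factor has the wrong sign: differentiating $\mu\varphi(e(\mu))=1$ twice gives $e''=-\frac{2e'}{\mu}\big(1+\mu e'B\big)$. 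With the corrected formula the two summands have opposite signs on both branches, so the paper's bare sign inspection is inconclusive as written; one really does need $2(\varphi')^2\le\varphi\varphi''$, which is precisely your Cauchy--Schwarz inequality. Your route through $F=1/\varphi$ and the inverse-function rule is equivalent to substituting $1/\mu=\varphi(e)$ into the corrected formula, which yields $e''=\big(2(\varphi')^2-\varphi\varphi''\big)/\big(\mu^3(\varphi')^3\big)$ and hence the stated concavity on $(\mu_o,+\infty)$ and convexity on $(-\infty,-\mu^o)$. Two further remarks. First, you explicitly prove the limits \eqref{asymptotics_e_chegara}, which the paper's proof never addresses; they do follow, as you indicate, from $\varphi(0^-)=1/\mu_o$, $\varphi(4d^2+)=-1/\mu^o$ and monotonicity of $\varphi^{-1}$ (with the convention $1/0=+\infty$ covering the degenerate thresholds). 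Second, your conclusion $e(\mu)/\mu\to-\int_{\T^d}|v(q)|^2dq$ for both $\mu\to+\infty$ and $\mu\to-\infty$ agrees with the computation inside the paper's proof; the symbol $\mp$ in \eqref{asymptotics_e_infty} is evidently a misprint, since $e(\mu)$ and $\mu$ have opposite signs on both branches. One small inaccuracy to repair in your write-up: on the branch $z>4d^2$, with $t:=z-\dispersion>0$ one has $\varphi=-\int_{\T^d}|v|^2t^{-1}dq$ and $\tfrac12\varphi''=-\int_{\T^d}|v|^2t^{-3}dq$, i.e.\ the negatives of the integrals you wrote; the two minus signs cancel in the product $\varphi\varphi''$, so the inequality $2(\varphi')^2\le\varphi\varphi''$ and everything downstream remain valid.
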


Now we study the rate of the convergences in  \eqref{asymptotics_e_chegara}.

\begin{theorem}[\textbf{Expansions of $e(\mu)$  at $\mu=\mu_o$}]\label{teo:exp_e_at0}
Let $n_o\ge0$ be such that 
\begin{equation*} 
|v(0)|^2=D^2|v(0)|^2 = \ldots=D^{2n_o-2}|v(0)|^2 = 0,\qquad D^{2n_o}|v(0)|^2\ne0,
\end{equation*}
where $D^jf(p)$ is the $j$-th order differential of $f$ at $p,$ i.e. the $j$-th order symmetric tensor 
$$
D^jf(p)[\underbrace{w,\ldots,w}_{j-times}] = \sum\limits_{i_1+\ldots +i_d =j,i_k\ge0} 
\frac{\p^jf(p)}{\p^{i_1} p_1\ldots \p^{i_d}p_d} \,w_1^{i_1}\ldots w_d^{i_d},\quad w=(w_1,\ldots,w_d)\in\R^d.
$$ 
Then 
$$
c_v:=\frac{2^{2n_o+d}}{(2n_o)!}\, \int_{\S^{d-1}} D^{2n_o}|v(0)|^2[w,\ldots, w]\,d\cH^{d-1}(w)>0
$$
and 
$$
\hat c_v:= \int_{\T^d} \frac{v(q)^2dq}{\dispersion(q)^2}\in(0,+\infty].
$$
Moreover, if $2n_o+d\ge5,$ then the equation $\hamiltonian_{\mu_o} f=0$ has a non-zero solution 
$$
f(p) = \frac{v(p)}{\dispersion(p)} 
$$
and $f\notin  L^2(\T^d)$ for $2n_o+d\in \{5,6,7,8\},$ and 
$f\in L^2(\T^d)$ for $2n_o+d\ge9.$

Finally, for $\mu>\mu_o$ let $e(\mu)<0$ be the eigenvalue of $\hamiltonian_\mu.$ 

\begin{itemize}
\item[(a)] Suppose that $d$ is odd:
\begin{itemize}
\item[(a1)] if $2n_o+d=1,3,$ then $\mu_o=0$ and for sufficiently small and positive $\mu,$
$$
(- e(\mu))^{1/4}= 
\begin{cases}
c_1\mu^{1/3} + \sum\limits_{n\ge1} c_{1,n} \mu^{\frac{n+1}{3}}, & 2n_o+d=1,\\
c_3\mu + \sum\limits_{n\ge1} c_{3,n} \mu^{n+1}, & 2n_o+d=3,
\end{cases}
$$
where $\{c_{1,n}\}$ and $\{c_{3,n}\}$ are some real coefficients and 
\begin{equation}\label{c1_c3}
c_1:=\Big(\frac{\pi c_v}{4}\Big)^{1/3}\qquad \text{and} \qquad c_3:=\frac{\pi c_v}{8};  
\end{equation}

\item[(a2)] if $2n_o+d=5,7,$ then $\mu_o>0$ and for sufficiently small and positive $\mu-\mu_o,$
$$
(- e(\mu))^{1/4}= 
\begin{cases}
c_5 (\mu-\mu_o) +\sum\limits_{n\ge1} c_{5,n} (\mu-\mu_o)^{n+1}, & 2n_o+d=5,\\
c_7 (\mu-\mu_o)^{1/3} + \sum\limits_{n\ge1} c_{7,n} (\mu-\mu_o)^{\frac{n+1}{3}}, & 2n_o+d=7, 
\end{cases}
$$
where $\{c_{5,n}\}$ and $\{c_{7,n}\}$ are some real coefficients and  
\begin{equation}\label{c5_c7}
c_5:=\frac{8}{\pi c_v\mu_o^2}  \qquad \text{and} \qquad c_7:=\Big(\frac{8}{\pi c_v\mu_o^2}\Big)^{1/3};  
\end{equation}

\item[(a3)] if $2n_o+d\ge9,$ then $\mu_o>0$ and for sufficiently small and positive $\mu-\mu_o,$
$$
(- e(\mu))^{1/4}= c_9(\mu-\mu_o)^{1/4} + \sum\limits_{n\ge1} c_{9,n} (\mu-\mu_o)^{n/4},  
$$
where $\{c_{9,n}\}$  are some real coefficients  and 
\begin{equation}\label{c9}
 c_9:=(\mu_o^2\hat c_v)^{-1/4}.
\end{equation}

\end{itemize}

\item[(b)] Suppose that $d$ is even:
\begin{itemize}
\item[(b1)] if $2n_o+d=2,4,$ then $\mu_o=0$ and for sufficiently small and positive $\mu,$
$$
(- e(\mu))^{1/2}= 
\begin{cases}
c_2\mu + \sum\limits_{n+m\ge1,n,m\ge0} c_{2,nm} \mu^{n+1}(-\mu\ln\mu)^{m}, & 2n_o+d=2,\\
ce^{-\frac{1}{c_4\mu}} +\sum\limits_{n+m\ge1,n,m\ge0} c_{4,nm} \mu^{n+1}\big(\frac1\mu\,e^{-\frac{1}{c_4\mu}}\big)^{m+1}, & 2n_o+d=4,
\end{cases}
$$
where $\{c_{2,nm}\}$ and $\{c_{4,nm}\}$ are some real coefficients,  $c>0,$ and
\begin{equation}\label{c2_c4}
c_2:= \frac{\pi c_v}{8}\qquad\text{and}\qquad c_4:= \frac{c_v}{8}; 
\end{equation}

\item[(b2)] if $2n_o+d=6,8,$ then $\mu_o>0$ and for sufficiently small and positive $\mu-\mu_o,$
$$
(- e(\mu))^{1/2}= 
\begin{cases}
c_6\tau^2 + \sum\limits_{n+m\ge1,n,m\ge0} c_{6,nm} \tau^{2n+2}\theta^m, & 2n_o+d=6,\\
c_8 \tau\sigma+ \sum\limits_{n+m+k\ge1,n,m,k\ge0} c_{8,nmk} \tau^{n+1} \sigma^{m+1}\eta^k, & 2n_o+d=8,
\end{cases}
$$
where $\{c_{4,nm}\}$ and $\{c_{8,nmk}\}$ are some real coefficients,
\begin{equation}\label{new_variables}
\begin{gathered}
\tau:=(\mu-\mu_o)^{1/2},\,\,\, \theta:=-\tau^2\ln\tau,\,\,\,
\sigma :=\Big(-\frac{1}{\ln\tau}\Big)^{1/2},\,\,\,
\eta:=-\frac{\ln\ln\tau^{-1}}{\ln\tau}, 
\end{gathered} 
\end{equation}
and 
\begin{equation}\label{c6_c8}
c_6:=\frac{8}{\pi c_v\mu_o^2}\qquad\text{and}\qquad c_8:=\Big(\frac{8}{c_v\mu_o^2}\Big)^{1/2}; 
\end{equation}

\item[(b3)] if $2n_o+d\ge10,$ then $\mu_o>0$ and for sufficiently small and positive $\mu-\mu_o,$
$$
(- e(\mu))^{1/2}= c_{10}\tau+ \sum\limits_{n+m\ge1,n,m\ge0} c_{10,nm} \tau^{n+1}\theta^m,   
$$
where $\{c_{10,nm}\}$  are some real coefficients and 
\begin{equation}\label{c10}
c_{10}:=(\mu_o^2\hat c_v)^{-1/2}.
\end{equation}
\end{itemize}
\end{itemize}
\end{theorem}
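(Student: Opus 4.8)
The plan is to reduce the eigenvalue problem to a single scalar equation and then to read off the asymptotics of its solution from a careful expansion of one explicit integral near the degenerate bottom of $\dispersion$. Because $\potential$ is rank-one, a number $z<0$ is an eigenvalue of $\hamiltonian_\mu$ precisely when the candidate eigenfunction \eqref{eigenfunction} is consistent, i.e. when
\[
\Delta(\mu;z):=1-\mu\int_{\T^d}\frac{|v(q)|^2}{\dispersion(q)-z}\,dq=0.
\]
Since $\dispersion(q)-z>0$ for $z<0$, this is equivalent to $\tfrac1\mu=\Phi(-z)$, where $\Phi(\kappa):=\int_{\T^d}\frac{|v(q)|^2}{\dispersion(q)+\kappa}\,dq$ is real-analytic and strictly decreasing on $(0,\infty)$ with $\Phi(0^+)=1/\mu_o\in(0,+\infty]$. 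First I would record the elementary facts: the virtual-state function $f=v/\dispersion$ solves $\hamiltonian_{\mu_o}f=0$ whenever $\mu_o>0$ (a one-line computation using $\int|v|^2/\dispersion=1/\mu_o$ and $v$ real), and $\|f\|_{L^2}^2=\int|v|^2/\dispersion^2=\hat c_v$, so $f\in L^2(\T^d)$ iff $\hat c_v<\infty$. A local exponent count near $q=0$, where $\dispersion(q)\sim\tfrac14|q|^4$ and $|v(q)|^2\sim r^{2n_o}$, then yields the dichotomy $\mu_o<\infty\iff 2n_o+d\ge5$, $\hat c_v<\infty\iff 2n_o+d\ge9$, giving the stated resonance behaviour for $2n_o+d\in\{5,6,7,8\}$ versus $2n_o+d\ge9$.

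The heart of the matter is the behaviour of $\Phi(\kappa)$ as $\kappa\searrow0$, and here the quartic degeneracy at $q=0$ is decisive. I would localize: away from $q=0$ the integrand is analytic in $\kappa$ and contributes a power series in $\kappa$; near $q=0$ I would use $|v(q)|^2=\tfrac{1}{(2n_o)!}D^{2n_o}|v(0)|^2[q,\dots,q]+O(|q|^{2n_o+1})$ together with the substitution $q=\kappa^{1/4}\xi$ (the natural scale since $\dispersion+\kappa\sim\tfrac14|q|^4+\kappa$). In polar coordinates the model radial integral is $\int_0^\delta\frac{r^{2n_o+d-1}}{\frac14 r^4+\kappa}\,dr$, whose scaling exponent is $\tfrac{2n_o+d-4}{4}$; the passage through convergent, borderline, and divergent regimes at the thresholds $2n_o+d=4$ and $2n_o+d=8$ is exactly what separates the three regimes $\mu_o=0$, $0$-energy resonance, and $\hat c_v<\infty$. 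Factoring out the leading homogeneous form and integrating it over $\S^{d-1}$ produces the constant $c_v$; its positivity $c_v>0$ follows because this leading form is a non-negative, not identically vanishing homogeneous polynomial, being the leading Taylor coefficient of the non-negative function $|v|^2$ at its zero. Carrying the expansion to all orders — the content of the appendix Section \ref{sec:asymp_integr} — yields a convergent expansion of $\Phi(\kappa)$ in powers of $\kappa^{1/4}$ for $d$ odd, and in powers of $\kappa^{1/4}$ together with $\ln\kappa$ (and $\ln\ln\kappa^{-1}$) for $d$ even.

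With the expansion of $\Phi$ in hand, I would invert $\tfrac1\mu=\Phi(-z)$ by the analytic implicit function theorem. Writing $t:=(-z)^{1/4}$, the cases with $d$ odd reduce, after factoring the leading power, to an equation of the form $s=t^{p}W(t)$ with $W$ analytic and $W(0)\ne0$, where $s$ is the natural coupling quantity ($s=\tfrac1\mu$ when $\mu_o=0$, and $s=\tfrac1\mu-\tfrac1{\mu_o}$ when $\mu_o>0$); the theorem then produces a convergent series for $t$ in $s^{1/p}$, which in the five regimes (a1)–(a3) translates into the stated series in $\mu^{1/3}$, $\mu$, $(\mu-\mu_o)$, $(\mu-\mu_o)^{1/3}$, and $(\mu-\mu_o)^{1/4}$. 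Matching leading terms identifies $c_1,c_3,c_5,c_7,c_9$ in terms of $c_v$ and $\hat c_v$; this last step is routine book-keeping.

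The main obstacle is the even-dimensional logarithmic cases (b1) with $2n_o+d=4$ and (b2) with $2n_o+d=6,8$, together with the exponentially small regime $2n_o+d=4$, where $-e(\mu)\sim c\,e^{-1/(c_4\mu)}$. Here $\Phi$ is not analytic in a single power of $\kappa$: its expansion genuinely mixes powers of $\tau=(\mu-\mu_o)^{1/2}$ with $\ln\tau$ and $\ln\ln\tau^{-1}$. The strategy would be to promote the transcendental quantities $\theta=-\tau^2\ln\tau$, $\sigma=(-1/\ln\tau)^{1/2}$ and $\eta=-\ln\ln\tau^{-1}/\ln\tau$ of \eqref{new_variables} to independent variables, to rewrite the characteristic equation as an analytic system in $(\tau,\theta)$, respectively $(\tau,\sigma,\eta)$, and to solve it by a multivariate analytic implicit function theorem; for $2n_o+d=4$ one first inverts the leading relation $\tfrac1\mu\sim-c\ln\kappa$ to obtain $\kappa\sim e^{-1/(c_4\mu)}$ and then bootstraps. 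Verifying that these transcendental variables genuinely decouple into an analytic system, and that the resulting multiple series converge, is the most delicate step, and it is precisely here that the remainder estimates of the appendix are indispensable.
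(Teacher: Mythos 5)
Your proposal is correct and follows essentially the same route as the paper: the rank-one reduction to the scalar equation $\Delta(\mu;z)=0$ (Lemma \ref{lem:determinantga_kelish}), the expansion of the threshold integral $\low_{|v|^2}$ near $z=0$ obtained by localizing at the degenerate quartic minimum of $\dispersion$ (Proposition \ref{prop:l_f_asymp} in the appendix), and inversion via the analytic implicit function theorem, with the transcendental quantities $\tau,\theta,\sigma,\eta$ promoted to independent variables in the even-dimensional logarithmic cases exactly as the paper does. The only differences are cosmetic (working with $\Phi(\kappa)=\low_{|v|^2}(-\kappa)$ and a direct scaling argument instead of the paper's exact change of variables $\varphi_i(y)=2\arcsin y_i$), so the proposal matches the paper's proof in structure and in all key steps.
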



\begin{theorem}[\textbf{Expansions of $e(\mu)$ at $\mu=-\mu^o$}] \label{teo:exp_e_atmax}
Let $n^o\ge0$ be such that
\begin{equation*} 
|v(\vec\pi)|^2=D^2|v(\vec\pi)|^2 = \ldots=D^{2n^o-2}|v(\vec\pi)|^2 = 0,\qquad D^{2n^o}|v(\vec\pi)|^2\ne0.
\end{equation*}
Then 
$$
\begin{aligned} 
C_v:=&\frac{2^{2n^o+d-1}}{(8d)^{n^o+d/2}\,(2n^o)!}\, 
 \int_{\S^{d-1}} D^{2n^o}|v(\vec\pi)|^2[w,\ldots,w]\,d\cH^{d-1}(w)>0
\end{aligned}
$$
and 
$$
\hat C_v:= \int_{\T^d} \frac{v(q)^2dq}{(4d^2 - \dispersion(q))^2}\in(0,+\infty].
$$
Moreover, if $2n_o+d\ge3,$ then the equation $\hamiltonian_{-\mu^o} f=4d^2f$ has a non-zero solution 
$$
f(p) = \frac{v(p)}{4d^2 - \dispersion(p)} 
$$
and $f\notin L^2(\T^d)$ for $2n^o+d\in \{3,4\},$ and 
$f\in L^2(\T^d)$ for $2n^o+d\ge5.$

Finally, for $\mu<-\mu^o$ let $e(\mu)>4d^2$ be the eigenvalue of $\hamiltonian_\mu.$ 

\begin{itemize}
\item[(a)] Suppose that $d$ is odd:
\begin{itemize}
\item[(a1)] if $2n^o+d=1,$ then $\mu^o=0$ and for sufficiently small and negative $\mu,$
$$
(e(\mu) - 4d^2)^{1/2}= -C_1\mu + \sum\limits_{n\ge1} C_{1,n} \mu^{n+1}, 
$$
where $\{C_{1,n}\}$  are some real coefficients and $C_1:=\pi C_v;$

\item[(a2)] if $2n^o+d=3,$ then $\mu^o>0$ and for sufficiently small and positive $\mu+\mu^o,$
$$
(e(\mu) - 4d^2)^{1/2}= 
C_3 (\mu+\mu^o) +\sum\limits_{n\ge1} C_{3,n} (\mu+\mu^o)^{n+1},
$$
where $\{C_{3,n}\}$ and $\{C_{7,n}\}$ are some real coefficients and  $C_3:=(\pi C_v{\mu^o}^2)^{-1};$

\item[(a3)] if $2n^o+d\ge5,$ then $\mu^o>0$ and for sufficiently small and positive $\mu+\mu^o,$
$$
(e(\mu) - 4d^2)^{1/2}= C_5(\mu+\mu^o)^{1/2} + \sum\limits_{n\ge1} C_{5,n} (\mu+\mu^o)^{(n+1)/2},  
$$
where $\{C_{5,n}\}$  are some real coefficients  and $C_5:=(\hat C_v {\mu^o}^2)^{-1/2}.$

\end{itemize}

\item[(b)] Suppose that $d$ is even:
\begin{itemize}
\item[(b1)] if $2n_o+d=2,$ then $\mu_o=0$ and for sufficiently small and negative $\mu,$
$$
e(\mu) - 4d^2=  
ce^{\frac{1}{C_2\mu}} +\sum\limits_{n+m\ge1,n,m\ge0} C_{2,nm} \mu^{n+1}\big(-\frac1\mu\,e^{\frac{1}{C_2\mu}}\big)^{m+1}, 
$$
where $\{C_{2,nm}\}$ are some real coefficients and $c>0$ and $C_2:=C_v;$

\item[(b2)] if $2n_o+d=4,$ then $\mu^o>0$ and for sufficiently small and positive $\mu+\mu^o,$
$$
C_4 \mu\sigma+ \sum\limits_{n+m+k\ge1,n,m,k\ge0} C_{4,nmk} \tau^{n+1} \sigma^{m+1}\eta^k,
$$
where $\{C_{4,nm}\}$ are some real coefficients, $C_4:=(C_v{\mu^o}^2)^{-1},$
$$
\tau:=\mu+\mu^o,\qquad 
\sigma :=-\frac{1}{\ln\tau},\qquad 
\eta:=-\frac{\ln\ln\tau^{-1}}{\ln\tau}; 
$$

\item[(b3)] if $2n_o+d\ge6,$ then $\mu^o>0$ and for sufficiently small and positive $\mu+\mu^o,$
$$
e(\mu)-4d^2 = C_6(\mu+\mu^o)+ \sum\limits_{n+m\ge1,n,m\ge0} C_{6,nm} (\mu+\mu^o)^{n+1}(-(\mu+\mu^o)\ln (\mu+\mu^o))^m,   
$$
where $\{C_{6,nm}\}$  are some real coefficients and $C_6:=(\hat C_v{\mu^o}^2)^{-1}.$
\end{itemize}
\end{itemize}

\end{theorem}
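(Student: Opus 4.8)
The plan is to reduce the eigenvalue problem to a scalar secular equation and then invert it near the threshold. Since $\potential$ is rank-one, a number $z>4d^2$ is an eigenvalue of $\hamiltonian_\mu$ exactly when the ansatz \eqref{eigenfunction} is consistent: writing the eigenequation as $(\dispersion-z)f=\mu\,v\langle v,f\rangle$ and pairing with $v$ gives, for a nontrivial solution, the characteristic equation
\begin{equation*}
\Delta(\mu;z):=1-\mu\int_{\T^d}\frac{v(q)^2}{\dispersion(q)-z}\,dq=0 .
\end{equation*}
With $\lambda:=z-4d^2>0$ and $g(q):=4d^2-\dispersion(q)\ge0$ this reads $-1/\mu=I(\lambda)$, where $I(\lambda):=\int_{\T^d}v(q)^2/(\lambda+g(q))\,dq$ is smooth and strictly decreasing on $(0,\infty)$, with $I(0)=1/\mu^o$ whenever the integral converges; monotonicity (cf.\ Theorem \ref{teo:existence_disc_spec}) makes $\lambda\mapsto\mu$ a local bijection, so solving the secular equation for $\lambda$ recovers $e(\mu)-4d^2$. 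First I would dispose of the virtual-state claims: substituting $f=v/(4d^2-\dispersion)$ into $\hamiltonian_{-\mu^o}f=4d^2f$ and using the definition of $\mu^o$ shows $f$ solves it precisely when $\int v^2/g<\infty$, i.e.\ $2n^o+d\ge3$, while the $L^2$-dichotomy follows from the local count $f(q)\sim|q-\vec\pi|^{n^o-2}$ near the non-degenerate maximum $\vec\pi$ of $\dispersion$, which lies in $L^2$ iff $2n^o+d\ge5$.

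The heart of the matter is the behaviour of $I(\lambda)$ as $\lambda\searrow0$. I would localize near $\vec\pi$, where $g(q)=2d|q-\vec\pi|^2\big(1+O(|q-\vec\pi|^2)\big)$ and, by the vanishing hypothesis, $v(q)^2=\tfrac{1}{(2n^o)!}D^{2n^o}|v(\vec\pi)|^2[w,\dots,w]+\dots$ with $w=q-\vec\pi$. Splitting $\T^d$ into a ball about $\vec\pi$ and its complement, the outer part is analytic in $\lambda$ at $\lambda=0$, while polar coordinates reduce the inner part to the model radial integrals $\int_0^\delta r^{2n^o+d-1}/(\lambda+2dr^2)\,dr$, the angular factor producing exactly the constant $C_v$. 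These are precisely the integrals analysed in the appendix Section \ref{sec:asymp_integr}, whose convergent expansions I would invoke: for $d$ odd a series in $\lambda^{1/2}$ whose leading non-analytic term is $\propto\lambda^{(2n^o+d-2)/2}$, and for $d$ even a series in $\lambda$ with logarithmic corrections $\lambda^{k}\ln\lambda$. This separates the regimes: $I(0)=+\infty$ (hence $\mu^o=0$) iff $2n^o+d\le2$; $\hat C_v=-I'(0)<\infty$ iff $2n^o+d\ge5$; and at the even borderline $2n^o+d=4$ the constant $\hat C_v$ diverges and a $\lambda\ln\lambda$ term governs the leading correction.

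It remains to invert $-1/\mu=I(\lambda)$ for $\lambda=e(\mu)-4d^2$ as a convergent expansion. For $\mu^o>0$ I would use $\mu+\mu^o=(\mu^o)^2\big(I(\lambda)-I(0)\big)\big(1+o(1)\big)$, insert the expansion of $I(0)-I(\lambda)$ from the previous step, and take as unknown $(e(\mu)-4d^2)^{1/2}$ when $d$ is odd and $e(\mu)-4d^2$ when $d$ is even. In the generic regimes ($2n^o+d=3$; $2n^o+d\ge5$ odd; $2n^o+d\ge6$ even) the resulting relation is real-analytic in this unknown and in $(\mu+\mu^o)^{1/2}$, respectively $\mu+\mu^o$, with non-vanishing linear coefficient, so the analytic implicit function theorem yields the stated series and fixes $C_1,C_3,C_5,C_6$ by matching leading terms. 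When $\mu^o=0$ I invert $-1/\mu=I(\lambda)$ directly. In the even borderline cases $2n^o+d=2,4$ the log/exponential terms obstruct analyticity in $\mu+\mu^o$ alone, so I would adjoin the auxiliary variables $\sigma$ and $\eta$ of part (b2) (and the exponential $e^{1/(C_2\mu)}$ for $2n^o+d=2$), show the secular equation becomes jointly analytic in these variables at the origin, and invert there, obtaining $C_2$ and $C_4$.

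The main obstacle will be exactly these even-dimensional borderline cases: choosing the correct finite family of transcendental variables in which $\Delta(\mu;z)=0$ becomes analytic, and verifying joint analyticity together with the non-degeneracy of the relevant Jacobian, so that Lagrange/analytic inversion delivers genuinely convergent---not merely asymptotic---series. A secondary but unavoidable chore is tracking the normalization constants through the localization and the appendix formulas so that the leading coefficients emerge in the stated closed forms.
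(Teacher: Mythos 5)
Your proposal is correct and takes essentially the same route as the paper: reduce to the rank-one secular equation $\Delta(\mu;z)=0$ (Lemma \ref{lem:determinantga_kelish}), expand the threshold integral near $z=4d^2$ by localizing at the non-degenerate maximum $\vec\pi$ and reducing to radial model integrals --- which is exactly the content of Proposition \ref{prop:U_f_asymp} --- and then invert via the analytic Implicit Function Theorem, adjoining logarithmic/exponential auxiliary variables in the even borderline cases. This mirrors the paper's own (sketched) proof, which simply invokes Proposition \ref{prop:U_f_asymp} with $f=|v|^2$ and repeats the inversion scheme of Theorem \ref{teo:exp_e_at0}, including your observations on the virtual-state dichotomy and the identification of the regimes $\mu^o=0$ ($2n^o+d\le2$) and $\hat C_v<\infty$ ($2n^o+d\ge5$).
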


We recall that in the literature the non-zero solutions of equations $\hamiltonian_\mu f=0$ and $\hamiltonian_\mu f= 4d^2 f$ not belonging to $L^2(\T^d)$ are called the {\it resonance states} \cite{ALM:04:Puan,ALMM:2006}.

\begin{remark}\label{rem:comments}
Few comments on the main results are in order.

\begin{itemize}
\item[1.]  The assertions of Theorem \ref{teo:existence_disc_spec} hold in fact for any $\hat v\in\ell^2(\Z^d)$  (see Remark \ref{rem:relaxed_assump});

\item[2.] Similar expansions of $e(\mu)$ in Theorems \ref{teo:exp_e_at0} and \ref{teo:exp_e_atmax}  at $\mu=\mu_o$ and $\mu=-\mu^o,$ respectively, still hold for any exponentially decaying $\hat v:\Z^d\to\C$ (see Remark \ref{rem:v_complex_case});

\item[3.] If $\hat v$ decays at most polynomially at infinity, i.e. $\hat v(x) = O(|x|^{-\alpha})$ for some $\alpha>0,$  then instead of the expansions in Theorem \ref{teo:exp_e_at0} and \ref{teo:exp_e_atmax} we obtain only  asymptotics of $e(\mu)$ (see Remark \ref{rem:e_mu_asymp}).
\end{itemize}

\end{remark} 

\section{Proof of main results}\label{sec:proof_mains}

In this section we prove the main results. 

\begin{lemma}\label{lem:determinantga_kelish}
$z\in\C\setminus[0,4d^2]$ is an eigenvalue of $\hamiltonian_\mu$ if and only if 
$\Delta(\mu;z)=0,$ where
$$
\Delta(\mu;z):=1 - \mu\int_{\T^2} \frac{|v(q)|^2dq}{\dispersion(q) - z}.
$$
\end{lemma}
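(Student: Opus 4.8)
The plan is to use the standard rank-one (Birman--Schwinger type) reduction, exploiting two structural facts already available: $\hamiltonian_0$ is multiplication by $\dispersion$ with $\sigma(\hamiltonian_0)=[0,4d^2]$, and $\potential$ sends every $f$ to a scalar multiple of the fixed vector $v$, namely $\potential f(p)=v(p)\int_{\T^d}\overline{v(q)}f(q)\,dq$. The first observation I would record is that for any $z\in\C\setminus[0,4d^2]$ the operator $\hamiltonian_0-z$ is boundedly invertible: since $\dispersion$ is continuous on the compact torus with range $[0,4d^2]$, one has $|\dispersion(q)-z|\ge \mathrm{dist}(z,[0,4d^2])>0$, so $(\hamiltonian_0-z)^{-1}$ is multiplication by $(\dispersion(\cdot)-z)^{-1}\in L^\infty(\T^d)$.

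For the forward direction, suppose $z$ is an eigenvalue with eigenfunction $f\neq0$, so $(\hamiltonian_0-\mu\potential)f=zf$, equivalently $(\hamiltonian_0-z)f=\mu\potential f=\mu c\,v$, where $c:=\int_{\T^d}\overline{v(q)}f(q)\,dq$ is a scalar. Inverting $\hamiltonian_0-z$ gives
$$
f(p)=\mu c\,\frac{v(p)}{\dispersion(p)-z}.
$$
Here I note $c\neq0$, for otherwise $f\equiv0$, contradicting that $f$ is an eigenfunction. Multiplying this identity by $\overline{v(p)}$, integrating over $\T^d$, and recalling the definition of $c$ yields $c=\mu c\int_{\T^d}\frac{|v(q)|^2}{\dispersion(q)-z}\,dq$; dividing by the nonzero scalar $c$ gives exactly $\Delta(\mu;z)=0$.

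For the converse, suppose $\Delta(\mu;z)=0$; since $\Delta(0;z)=1$, necessarily $\mu\neq0$. I define $f$ by \eqref{eigenfunction}, $f(p)=\frac{v(p)}{\dispersion(p)-z}$. Because the denominator is bounded away from zero and $v\in L^2(\T^d)\setminus\{0\}$, we have $f\in L^2(\T^d)\setminus\{0\}$. Using the algebraic identity $\dispersion(p)f(p)=v(p)+zf(p)$ together with $\int_{\T^d}\overline{v(q)}f(q)\,dq=\int_{\T^d}\frac{|v(q)|^2}{\dispersion(q)-z}\,dq=\mu^{-1}$ (which is precisely $\Delta(\mu;z)=0$), a direct computation gives
$$
\hamiltonian_\mu f=\dispersion f-\mu v\int_{\T^d}\overline{v(q)}f(q)\,dq=(v+zf)-v=zf,
$$
so $z$ is an eigenvalue with eigenfunction \eqref{eigenfunction}.

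This argument is essentially routine, and makes no use of $z$ being real, so it covers all $z\in\C\setminus[0,4d^2]$ uniformly. The only points I would state with care are the invertibility of $\hamiltonian_0-z$ off $[0,4d^2]$ — which simultaneously guarantees that $f$ lands in $L^2(\T^d)$ and that the reduction is reversible — and the nonvanishing of the scalar $c$ in the forward direction, without which one could not divide to extract $\Delta(\mu;z)=0$. (I would also silently correct the typo $\T^2$ in the statement to $\T^d$.)
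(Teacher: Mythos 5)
Your proof is correct and follows essentially the same route as the paper: in the forward direction both derive the representation $f(p)=\mathrm{const}\cdot v(p)/(\dispersion(p)-z)$ from the eigenvalue equation and divide out the nonzero scalar $c=\int\overline{v}f$ to obtain $\Delta(\mu;z)=0$, and in the converse both exhibit the explicit eigenfunction \eqref{eigenfunction} and verify it directly. Your write-up is in fact slightly more careful than the paper's (you make the bounded invertibility of $\hamiltonian_0-z$ explicit, and your final computation $\hamiltonian_\mu f=(v+zf)-v=zf$ correctly carries the factor $v(p)$ that the paper's displayed verification drops by a typo), but these are refinements of the same argument, not a different one.
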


\begin{proof}
Suppose that $z\in \C\setminus[0,4d^2]$  is an eigenvalue of $\hamiltonian_\mu$ with an associated eigenfunction $0\ne f\in L^2(\T^d).$
Then from $\hamiltonian_\mu f=zf$ it follows that 
\begin{equation} \label{f_p}
f(p) = \frac{Cv(p)}{\dispersion(p) -z},
\end{equation}
where 
\begin{equation}\label{C}
C:=\mu \int_{\T^d} \overline{v(q)}f(q)dq.
\end{equation}
Inserting the representation \eqref{f_p} of $f$ in \eqref{C} we get
\begin{equation}\label{delta} 
C= C\mu \int_{\T^d} \frac{|v(q)|^2dq}{\dispersion(q) - z}. 
\end{equation}
Since $C\ne0$ (otherwise $f=0$ by \eqref{f_p}), 
from \eqref{delta}  it follows that $\Delta(\mu;z)=0.$

Conversely, suppose that $\Delta(\mu;z)=0$ for some $z\in\C\setminus[0,4d^2],$ 
and set 
$$
f(p):=\frac{v(p)}{\dispersion(p) - z}.
$$
Then $f\in L^2(\T^d)\setminus\{0\}$ and
$$
(\hamiltonian_\mu - z)f(p) = 1 - \mu\int_{\T^d} \frac{|v(q)|^2dq}{\dispersion(q) - z} = \Delta(\mu;z) =0,
$$
i.e.  $z$ is an eigenvalue of $\hamiltonian_\mu$ with associated eigenvector $f.$
\end{proof}

Notice that 
$$
z\in\C\setminus[0,4d^2] \mapsto \Delta(\mu;z) 
$$ 
is analytic for any $\mu\in\R$ and and 
$$
\frac{\p }{\p z}\,\Delta(\mu;z) = -\mu \int_{\T^d}\frac{|v(q)|^2dq}{(\dispersion(q) - z)^2}.
$$
Thus, for any $\mu>0$ resp. for any $\mu<0,$ the function $\Delta(\mu;\cdot)$ is strictly decreasing resp. strictly increasing in $\R\setminus [0,4d^2].$ Moreover,
\begin{equation}\label{Delta_xosdda}
\lim\limits_{z\to\pm\infty} \Delta(\mu;z) =1.  
\end{equation}

\begin{proof}[Proof of Theorem \ref{teo:existence_disc_spec}]
By the definition of $\mu_o,$ for any $\mu<-\mu^o:$ 
$$
\lim\limits_{z\nearrow-\mu^o} \Delta(\mu;z) = 1 + \frac{\mu}{\mu^o}<0,\qquad  \lim\limits_{z\to+\infty} \Delta(\mu;z) =1.
$$
Since $\Delta(\mu;z)>1$ for $z<0$ and $\mu>-\mu^o,$ 
in view of  the strict monotonicity $\Delta(\mu;\cdot)$ in $(4d^2,\infty),$ for any $\mu<-\mu^o$
there exists a unique $e(\mu)\in(4d^2,+\infty)$ such that $\Delta(\mu;e(\mu))=0.$
Analogously, for any $\mu>\mu_o$ there exists a unique $e(\mu)\in(-\infty,0)$ such that $\Delta(\mu;e(\mu))=0.$
By the Implicit Function Theorem  the function
$\mu\in\R\setminus[-\mu^o,\mu_o]\mapsto  e(\mu)$ is real-analytic.
Moreover, computing the derivatives of the implicit function $e(\mu)$ we find:
\begin{equation}\label{e_mu_der}
e'(\mu) = - \frac{1}{\mu}\,\int_{\T^d} \frac{|v(q)|^2dq}{\dispersion(q) - e(\mu)}\,\Big( \int_{\T^d} \frac{|v(q)|^2dq}{(\dispersion(q) - e(\mu))^2}\Big)^{-1},\qquad \mu\ne0, 
\end{equation}
thus, using $\mu(\dispersion(q) - e(\mu))>0$ we get $e'(\mu)<0,$ i.e. $e(\cdot)$ is strictly decreasing in $\R\setminus\{0\}.$
Differentiating \eqref{e_mu_der} one more time we get
$$
e''(\mu) =  \frac{2e'(\mu)}{\mu}\,\left(1-\mu e'(\mu)\,\int_{\T^d} \frac{|v(q)|^2dq}{(\dispersion(q) - e(\mu))^3}\,\left( \int_{\T^d} \frac{|v(q)|^2dq}{(\dispersion(q) - e(\mu))^2}\right)^{-1}\right).
$$
Therefore, $e''(\mu)>0$ (i.e. $e(\cdot)$ is strictly convex) for $\mu<0$ and $e''(\mu)<0$ (i.e. $e(\cdot)$ is strictly concave) for $\mu>0.$  

To prove \eqref{asymptotics_e_infty},  
first we let $\mu\to\pm\infty$ in 
\begin{equation}\label{adjkkjd}
1= \mu\int_{\T^d} \frac{|v(q)|^2dq}{\dispersion(q) - e(\mu)} 
\end{equation}
and find  $\lim\limits_{\mu\to\pm\infty} e(\mu)=\mp\infty.$ In particular, 
if $|\mu|$ is sufficiently large, $|\frac{\dispersion(q)}{e(\mu)}|<\frac12$ and hence, 
by \eqref{adjkkjd} and the Dominated Convergence Theorem,
$$
\lim\limits_{\mu\to\pm\infty} \frac{e(\mu)}{\mu} = -  \lim\limits_{\mu\to\pm\infty} \int_{\T^d}\frac{|v(q)|^2dq}{1- \frac{\dispersion(q)}{e(\mu)}} = -\int_{\T^d} |v(q)|^2dq.
$$
\end{proof}

\begin{remark}\label{rem:relaxed_assump}
According to the proofs of Lemma \ref{lem:determinantga_kelish} and Theorem \ref{teo:existence_disc_spec}, their assertions still holds for any $v\in \ell^2(\Z^d).$ 
\end{remark}

\begin{proof}[Proof of Theorem \ref{teo:exp_e_at0}]
Since 
\begin{equation}\label{v_kv_ning_ifodasi}
|v(p)|^2= (2\pi)^{-d}\Big(\sum\limits_{x\in\Z^d} \hat v(x)\cos p\cdot x\Big)^2+(2\pi)^{-d}\Big(\sum\limits_{x\in\Z^d} \hat v(x)\sin p\cdot x\Big)^2, 
\end{equation}
the function $p\in\T^d\mapsto |v(p)|^2$ is nonnegative even real-analytic function. 
Notice also that if $n_o\ge1,$ then by the nonnegativity of $|v|^2,$ $p=0$ is a global minimum for $|v|^2.$ Therefore, the tensor
$
D^{2n_o}|v(0)|^2 
$ is positively definite and
\begin{equation}\label{c0}
c_v:=\frac{2^{2n_o+d}}{(2n_o)!} \int_{\S^{d-1}} D^{2n_o} |v(0)|^2[w,\ldots,w]d\cH^{d-1}>0.
\end{equation}
Note  that    
$$
\hat c_v = \low_{|v|^2}'(0)= \int_{\T^d}\frac{|v(q)|^2dq}{\dispersion(q)^2},
$$
where $\low_f$ is defined in \eqref{def_l_f}. By Proposition \ref{prop:l_f_asymp}, $f(p)=\frac{v(p)}{\dispersion(p)}\in L^2(\T^d)$ if and only if $2n_o+d\ge9.$ Moreover, by definition, $\mu_o>0$ and  $\Delta(\mu_o;0)=0$  for $2n_o+d\ge5,$ and hence,  as in the proof of Lemma \ref{lem:determinantga_kelish} for such $d$ one can show that $\hamiltonian_{\mu_o}f=0.$ 

In view of the strict monotonicity and \eqref{asymptotics_e_chegara} there exists a unique $\mu_1>0$ such that $e(\mu)\in(-\frac{1}{128},0)$ for any $\mu\in(0,\mu_1).$ Since 
\begin{equation}\label{eq_wrt_tau0}
\mu = (\low_{|v|^2}(e(\mu)))^{-1}, 
\end{equation}
we can use Proposition \ref{prop:l_f_asymp} with $f=|v|^2$ and $e:=e(\mu),$
to find the expansions of the inverse function $\mu:=\mu(e).$ Then applying the appropriate versions of the Implicit Function Theorem in analytical case we get the expansions of $e=e(\mu).$
Notice that from \eqref{l_f_asymp_d_odd} and \eqref{l_f_asymp_d_even} as well as \eqref{eq_wrt_tau}  it follows that $\mu_o=0$ for $2n_o+d \le  4$ and $\mu_o=\Big(\int_{\T^d} \frac{|v(q)|^2dq}{\dispersion(q)}\Big)^{-1}>0$ for $2n_o+d\ge5.$ 

(a) Suppose that $d$ is odd. In view of the expansions \eqref{l_f_asymp_d_odd} of $\low_f$, in this case, \eqref{eq_wrt_tau0} is reduced to the inverting the equation 
\begin{equation}\label{eq_wrt_tau}
\mu = g(\alpha),
\end{equation}
where $\alpha:=(-e)^{1/4}$ and $g$ is an analytic function around $\alpha=0.$  

{\it Case $2n_o+d=1.$}  In this case by \eqref{l_f_asymp_d_odd},  
$$
g(\alpha) := \frac{\alpha^3}{c_1^3 + \sum\limits_{n\ge1} a_n \alpha^n}, 
$$
where $\{a_n \}\subset\R$ and  $c_1$ is given by \eqref{c1_c3},
and \eqref{eq_wrt_tau} is equivalently represented as  
\begin{equation}\label{inv_eq_d1}
\alpha  = \mu\Big(c_1^3 + \sum\limits_{n\ge1} a_n \alpha^{n}\Big)^{1/3},
\end{equation}
where $\mu=\mu^{1/3}.$ Now setting 
\begin{equation}\label{oshk_d1}
\alpha= \mu(c_1 + u), 
\end{equation}
and using the Taylor series of $(c_1^3+x)^{1/3},$
for $\mu$ and $u$ sufficiently small we rewrite \eqref{inv_eq_d1} as 
\begin{equation}\label{oshkormas_d1}
F(u,\mu):= u - \sum\limits_{n\ge1} \tilde a_n  \mu^n(c_1 + u)^n =0, 
\end{equation}
where $F(\cdot,\cdot)$ is analytic at $(u,\mu)=(0,0),$  
$F(0,0)=0$ and $F_u(0,0) =1.$ Hence, by the Implicit Function Theorem, there exists $\gamma_1>0$ such that for $|\mu|<\gamma_1,$ \eqref{oshkormas_d1} has a unique real-analytic  solution $u=u(\mu)$ which can be represented as an absolutely convergent series  $u=\sum\limits_{n\ge1} b_n\mu^n.$
Putting this in \eqref{oshk_d1} and recalling the definitions of $\alpha$ and $\mu$ we get the expansion of $(-e(\mu))^{1/4}$ for $\mu>0$ small.  

{\it Case $2n_o+d=3.$} By \eqref{l_f_asymp_d_odd},
\begin{equation}\label{inv_eq_d3}
g(\alpha)=\alpha \Big(c_3 +  \sum\limits_{n\ge1} a_n \alpha^n\Big)^{-1},  
\end{equation}
where $\{a_n\}\subset\R$ and $c_3$ is given by \eqref{c1_c3},
and hence\eqref{eq_wrt_tau} is represented as  
$$
\alpha = \mu \Big(c_3 +  \sum\limits_{n\ge1} a_n \alpha^n\Big).
$$
Then setting $\alpha=\mu(c_3+u)$ we rewrite \eqref{inv_eq_d3} in the form \eqref{oshkormas_d1}, and as in the case of $2n_o+d=1,$ we get the expansion of $(-e(\mu))^{1/4}.$

{\it Case $2n_o+d=5.$}  In this case by \eqref{l_f_asymp_d_odd}  
\begin{equation}\label{inv_eq_d5}
g(\alpha)= \Big(\frac{1}{\mu_o} - \frac{\pi c_v\alpha}{8}\Big(1+ \sum\limits_{n\ge1}a_n\alpha^n \Big)\Big)^{-1}, 
\end{equation}
where  $\{a_n\}\subset\R,$ and hence, by \eqref{eq_wrt_tau}, 
\begin{equation}\label{inv_eq_d5p}
\frac{\mu - \mu_o}{\mu\mu_o}= \frac{\pi c_v\alpha}{8}\Big(1+  \sum\limits_{n\ge1}a_n\alpha^n \Big). 
\end{equation}
Note that if $|\mu-\mu_o|<\mu_o,$ then 
\begin{equation}\label{adadd}
\frac{\mu - \mu_o}{\mu\mu_o} = \frac{\mu - \mu_o}{{\mu_o}^2+ \mu_o(\mu - \mu_o)} = \frac{\mu - \mu_o}{{\mu_o}^2} \sum\limits_{n\ge0} \Big(\frac{\mu - \mu_o}{\mu_o}\Big)^n, 
\end{equation}
thus from \eqref{inv_eq_d5p} we get 
$$
\alpha = (\mu - \mu_o)\Big(c_5 + c_5 \sum\limits_{n\ge1} \mu_o^{-n} (\mu - \mu_o)^n\Big)\,\Big(1+  \sum\limits_{n\ge1}a_n\alpha^n \Big)^{-1}. 
$$
and $c_5$ is given by \eqref{c5_c7}.  
Now setting $\alpha=(\mu-\mu_o)\,(c_5+u)$
 for sufficiently small and positive $\mu-\mu_o$ we get
$$
u=\sum\limits_{n,m\ge1} \tilde c_{n,m}(\mu - \mu_o)^n (c_5 + u)^m,
$$
where $\tilde c_{n,m}\subset\R.$ By the Implicit Function Theorem, 
for sufficiently small $\mu-\mu_o$ there exists a unique real-analytic function $u=u(\mu)$ given by the absolutely convergent series 
$u(\mu) = \sum\limits_{n\ge1} b_n(\mu - \mu_o)^n.$ By the definition of $\alpha, $ this implies  the expansion of $(-e(\mu))^{1/4}.$

{\it Case $2n_o+d=7.$}  As the previous case, by \eqref{l_f_asymp_d_odd} and \eqref{adadd}, the equation \eqref{eq_wrt_tau} is represented as  
\begin{equation}\label{inv_eq_d7}
(\mu - \mu_o)\Big(c_7^3 + c_7^3\sum\limits_{n\ge1} \mu_o^{-n}(\mu - \mu_o)^n\Big) = \alpha^{3} \Big(1 + \sum\limits_{n\ge1}a_n\alpha^n \Big), 
\end{equation}
where  $\{a_n\}\subset\R$   and $c_7$ is given by \eqref{c5_c7}. When $\mu-\mu_o>0$ is small enough, by the Taylor series of $(1+x)^{\pm1/3}$ at $x=0,$ \eqref{inv_eq_d7} is equivalently rewritten as 
\begin{equation}\label{inv_eq_d7p}
\alpha = (\mu - \mu_o)^{1/3}\Big(c_7 + \sum\limits_{n\ge1} \tilde c_n(\mu - \mu_o)^n\Big) \Big(1 + \sum\limits_{n\ge1}\tilde a_n\alpha^n \Big), 
\end{equation}
Thus, for   $\mu=(\mu-\mu_o)^{1/3},$ setting  $\alpha=\mu\,(c_7+u)$ in \eqref{inv_eq_d7p}, for sufficiently small and positive $\mu$ we get
$$
u=\sum\limits_{n,m\ge1} \tilde c_{n,m}\mu^n(c_7+u)^m.
$$
By the Implicit Function Theorem, this equation has a unique real-analytic solution $u=u(\mu)$ given by the absolutely convergent series $u=\sum\limits_{n\ge1} b_n\mu^n.$ 
This, definitions of $\alpha$ and $\mu$ imply the expansion of $(-e(\mu))^{1/4}.$

{\it Case $2n_o+d=9.$}  In this case by \eqref{l_f_asymp_d_odd}  and \eqref{adadd} 
\begin{equation}\label{inv_eq_d9}
(\mu - \mu_o)\Big(c_9^4 + c_9^4\sum\limits_{n\ge1} \mu_o^{-n}(\mu - \mu_o)^n\Big) = \alpha^4 \Big(1 + \sum\limits_{n\ge1}a_n\alpha^n \Big), 
\end{equation} 
where $\{a_n\}\subset\R$ and $c_9$ is given by \eqref{c9}. Thus, for sufficiently small and positive $\mu-\mu_o$ using the Taylor series of $(1+x)^{\pm1/4}$ at $x=0,$ this equation can also be represented as 
\begin{equation}\label{inv_eq_d9p}
\alpha = \mu \Big(c_9 + \sum\limits_{n\ge1} \tilde b_n\mu^{4n}\Big) \Big(1 + \sum\limits_{n\ge1}\tilde a_n\alpha^n\Big),
\end{equation} 
where $\mu:= (\mu - \mu_o)^{1/4}.$
Now  setting $\alpha= \mu(c_9+ u)$ in \eqref{inv_eq_d9} we get
$$
u=\sum\limits_{n,m\ge1} \tilde c_{n,m}\mu^n(c_9+u)^m,
$$
and the expansion of $(-e(\mu))^{1/4}$ follows as in the case of $2n_o+d=7.$

(b) Suppose that $d$ is even. In view of the expansion \eqref{l_f_asymp_d_odd} of $\low_f$, in this case, \eqref{eq_wrt_tau0} is reduced to the inverting the equation 
\begin{equation}\label{eq_wrt_tau1}
\mu = \frac{\alpha^l}{g(\alpha) +  h(\alpha) \ln\alpha },
\end{equation}
where $\alpha:=(-e)^{1/2},$ $l\in\N_0,$ and  $g$ and $h$ are analytic around $\alpha=0.$ Presence of $\ln\alpha$ implies that unlike the case of $2n_o+d$ odd, $\alpha$ is not necessarily analytic with respect to $\mu.$ Therefore, we need to introduce new variables dependent on $\ln\mu$ to reduce the problem to the Implicit Function Theorem.

{\it Case $2n_o+d=2.$} By \eqref{l_f_asymp_d_even}, in this case 
$$
l=1,\qquad g(\alpha)=c_2 + \sum\limits_{n\ge1} a_n\alpha^n,\qquad h(\alpha) =\sum\limits_{n\ge1} b_n\alpha^{2n}.
$$
Hence, setting 
\begin{equation}\label{lafa2}
\alpha=\mu(c_2 + u) 
\end{equation}
and $\tau = -\mu\ln\mu$ we represent  \eqref{eq_wrt_tau1} as 
\begin{align}
F(u,\mu,\tau):=u - \sum\limits_{n\ge1} a^n\mu^n(c_2+u)^n & + \ln(c_2+u)\sum\limits_{n\ge1} b^n\mu^n(c_2+u)^n\no \\
&-\tau\sum\limits_{n\ge1} b^n\mu^{n-1}(c_2+u)^n=0, \label{oshkormas_d2}
\end{align}
where $F$ is analytic around $(0,0,0),$ $F(0,0,0)=0,$ $F_u(0,0,0)=1.$ Hence, by the Implicit Function Theorem, there exists a unique real-analytic function $u=u(\mu,\tau)$ given by the convergent series $u(\mu,\tau) = \sum\limits_{n+m\ge1,n,m\ge0} \tilde c_{n,m}\mu^n\tau^m$ for sufficiently small $|\mu|$ and $|\tau|,$ which satisfies $F(u(\mu,\tau),\mu,\tau)\equiv0.$ 
Inserting $u$ in \eqref{lafa2} we get the expansion of $\alpha=(-e)^{1/2}.$ 

{\it Case $2n_o+d=4.$}  In this case, by \eqref{l_f_asymp_d_even},
$$
l=0,\qquad g(\alpha)=\sum\limits_{n\ge0} a_n\alpha_n,\qquad h(\alpha) = -c_4 + \sum\limits_{n\ge1} b_n\alpha^{2n}.
$$
Letting $\alpha= e^{-\frac{1}{c_4\mu}}(c+u),$ where $c= e^{a_0/c_4}>0,$   
we represent \eqref{eq_wrt_tau1} as 
\begin{align}
\ln(c+u) - b_0 = &\frac{1}{\mu}\,e^{-\frac{1}{c_4\mu}} \sum\limits_{n\ge1} a^ne^{-\frac{n-1}{c_4\mu}} (c+u)^n\no\\
 +  &\ln(c+u)\sum\limits_{n\ge1} b^ne^{-\frac{n}{c_4\mu}}(c+u)^n 
-\sum\limits_{n\ge1} a^ne^{-\frac{n}{c_4\mu}}(c+u)^n=0. \label{oshkormas_d4}
\end{align} 
Writing $\tau:=\frac{1}{\mu}\,e^{-\frac{1}{c_4\mu}}$ so that $e^{-\frac{1}{c_4\mu}} = \mu\tau,$ \eqref{oshkormas_d4} is represented as 
\begin{align*}
F(u,\mu,\tau):= &\ln(c+u) - b_0 -\mu\sum\limits_{n\ge1} a^n\mu^{n-1}\tau^{n-1} (c+u)^n\\ 
&-\ln(c+u)\sum\limits_{n\ge1} b^n\mu^n\tau^n(c+u)^n 
 +\sum\limits_{n\ge1} a^n\mu^n\tau^n(c+u)^n=0,
\end{align*}
where $F$ is analytic around $(0,0,0),$ $F(0,0,0)=0,$ and $F_u(0,0,0)=\frac1c>0.$
Thus, by the Implicit Function Theorem, for $|\mu|,$ $|\tau|$ and $|u|$ small there exists a unique real analytic function $u=u(\mu,\tau)$ given by the convergent series 
$u=\sum\limits_{n+m\ge1,n,m\ge0} \tilde c_{n,m}\mu^n\tau^m$ such that $F(u(\mu,\tau),\mu,\tau)\equiv0.$  Since $\tau = \frac1\mu e^{-\frac{1}{c_4\mu}},$ this implies 
$$
\alpha= e^{-\frac{1}{c_4\mu}}(c+u) = ce^{-\frac{1}{c_4\mu}} + \sum\limits_{n+m\ge1,n,m\ge0} \tilde c_{n,m}\mu^{n+1}\Big(\frac1\mu e^{-\frac{1}{c_4\mu}}\Big)^{m+1}.
$$

{\it Case $2n_o+d=6.$}  In this case, by \eqref{l_f_asymp_d_even} and the definition \eqref{c6_c8} of $c_6,$,
$$
l=0,\qquad g(\alpha)=\frac{1}{\mu_o} -\frac{1}{c_6\mu_o^2} \Big(\alpha + \sum\limits_{n\ge2} a_n\alpha^n\Big),\qquad h(\alpha) = \frac{1}{c_6\mu_o^2} \sum\limits_{n\ge1} b_n\alpha^{2n},
$$
and hence, \eqref{eq_wrt_tau1} is represented as 
$$
\frac{1}{\mu} -\frac{1}{\mu_o} = \frac{1}{c_6\mu_o^2}\Big(\alpha  +\sum\limits_{n\ge2} a_n\alpha^n  + \ln\alpha  \sum\limits_{n\ge1} b_n\alpha^{2n}\Big), 
$$
or equivalently, by \eqref{adadd},
\begin{equation}\label{inv_eq_d6}
\alpha = c_6(\mu - \mu_o) \sum\limits_{n\ge0} \Big(\frac{\mu - \mu_o}{\mu_o}\Big)^n - \sum\limits_{n\ge2} a_n\alpha^n -\ln\alpha  \sum\limits_{n\ge1} b_n\alpha^{2n}. 
\end{equation}
Recalling the definitions of $\tau$ and $\theta$ in \eqref{new_variables},
setting $\alpha= \tau^2\,(c_6+u),$  we represent \eqref{inv_eq_d6} as 
\begin{align*}
F(u,\tau, \theta):=  u &-c_6\sum\limits_{n\ge1}\frac{\tau^{2n}}{\mu_o^n}- \sum\limits_{n\ge2} a_n\tau^{2n-2}(c_6+u)^n\no \\
&- \ln(c_6+u)  \sum\limits_{n\ge1} b_n\tau^{4n}(c_6+u)^{2n} -   
\theta \sum\limits_{n\ge1} b_n\tau^{4n-4}(c_6+u)^{2n}=0,
\end{align*} 
where $F$ is real-analytic around $(0,0,0),$ $F(0,0,0)=0$ and $F_u(0,0,0)=1,$ and $F$ is even in $\tau.$ 
Thus, by the Implicit Function Theorem, for $|u|,$ $|\tau|$ and $|\theta|$ small there exists a unique real analytic function $u=u(\tau,\theta),$ even in $\tau,$ given by the convergent series $u=\sum\limits_{n+m\ge1,n,m\ge0} \tilde c_{n,m}\tau^{2n}\theta^m$ such that $F(u(\tau,\theta),\tau,\theta)\equiv0.$ Thus, 
$$
\alpha= \tau^2\,(c_6+u) = c_6\sigma + \sum\limits_{n+m\ge1,n,m\ge0}  \tilde c_{n,m}\tau^{2n+2}\theta^m.
$$

{\it Case $2n_o+d=8.$} By \eqref{l_f_asymp_d_even} and the definition \eqref{c6_c8} of $c_8,$
$$
l=0,\qquad g(\alpha)= \frac{1}{\mu_o^2c_8^2}\sum\limits_{n\ge2} a_n\alpha^n,\qquad h(\alpha) =\frac{1}{\mu_o^2c_8^2}\Big(\alpha^2 + \sum\limits_{n\ge2} b_n\alpha^{2n}\Big),
$$
thus, as in the case of $2n_o+d=6,$ \eqref{eq_wrt_tau1} is represented as 
\begin{equation}\label{inv_eq_d8}
c_8^2(\mu - \mu_o) \sum\limits_{n\ge0} \Big(\frac{\mu - \mu_o}{\mu_o}\Big)^n =   \alpha^2\ln\alpha  +\ln\alpha \sum\limits_{n\ge2} b_n\alpha^{2n}+\sum\limits_{n\ge2} a_n\alpha^n.  
\end{equation}
For $\tau,$ $\sigma$ and $\eta$ given in \eqref{new_variables} set $\alpha = \tau\sigma(c_8+u)$ and 
represent \eqref{inv_eq_d8} as 
\begin{align*}
2c_8u+  u^2 = & c_8^2\sum\limits_{n\ge1} \frac{\tau^{2n}}{\mu_o^n} + \sum\limits_{n\ge2}a_n \tau^{n-1} \sigma^{n+1}(c_8+u)^{n+2} -\sum\limits_{n\ge2}b_n (\tau\sigma)^{2n-2}(c_8+u)^{2n+2}\\
& + \Big( \sigma^2\ln(c_8+u) -\frac{\eta}{2}  \Big)\Big((c_8+u)^2+\sum\limits_{n\ge2}b_n (\tau\sigma)^{2n-2}(c_8+u)^{2n+2}\Big).
\end{align*}
This equation is represented as 
$
F(u,\tau,\sigma,\eta)=0,
$ 
where $F$ is real-analytic in a neighborhood of $(0,0,0,0),$ $F(0,0,0,0)=0$ and $F_u(0,0,0,0) =2c_8>0.$ Hence, for $|u|,$ $|\tau|,$ $|\sigma|$ and $|\eta|$ small, by the Implicit Function Theorem, there exists a unique real-analytic function $u=u(\tau,\sigma,\eta)$ given by the convergent series $u=\sum\limits_{n+m+k\ge1,n,m,k\ge0} \tilde c_{n,m,k}\tau^n\sigma^m\mu^k$ such that $F(u(\tau,\sigma,\eta),\tau,\sigma,\eta)\equiv 0.$ Thus,
$$
\alpha= \tau\sigma(c_8+u) =c_8\tau\sigma + \sum\limits_{n+m+k\ge1,n,m,k\ge0} \tilde c_{n,m,k}\tau^{n+1}\sigma^{m+1}\eta^k.
$$

{\it Case $2n_o+d\ge10.$} By \eqref{l_f_asymp_d_even} and the definition \eqref{c10} of $c_{10},$ 
$$
l=0,\qquad g(\alpha)= \frac{1}{\mu_o} + \hat c_v \alpha^2 + \sum\limits_{n\ge2} a_n\alpha^{n+2},\qquad h(\alpha) =\sum\limits_{n\ge2} b_n\alpha^{2n},
$$
and as in the case of $2n_o+d=6,$ \eqref{eq_wrt_tau1} is represented as 
\begin{equation}\label{inv_eq_d10}
\frac{\mu - \mu_o}{\mu_o^2} \sum\limits_{n\ge0} \Big(\frac{\mu - \mu_o}{\mu_o}\Big)^n =   \hat c_v \alpha^2 +  \sum\limits_{n\ge2} a_n\alpha^{n+2}+ \ln\alpha \sum\limits_{n\ge2} b_n\alpha^{2n}. 
\end{equation}
Recalling the definitions of $\tau$ and $\theta$ in \eqref{new_variables}, we set $\alpha=\tau(c_{10} +u).$  Then \eqref{inv_eq_d10} is represented as 
\begin{align*}
F(u,\tau,\theta):=  & 2c_{10}u  +  u^2 - c_{10}^2\sum\limits_{n\ge1} \frac{\tau^{2n}}{\mu_o^n}
 + \sum\limits_{n\ge2}a_n \tau^n (c_{10}+u)^{n+2}\\
 &- \theta\sum\limits_{n\ge2}b_n \tau^{2n-4}(c_8+u)^{2n} 
 + \ln(c_{10}+u)\sum\limits_{n\ge2}b_n \tau^{2n-2}(c_8+u)^{2n}=0,
\end{align*}
where $F$ is analytic at $(0,0,0),$ $F(0,0,0)=0$ and $F_u(0,0,0)=2c_{10}>0.$ Thus, by the Implicit Function Theorem, for $|u|,$ $|\tau|$ and $|\theta|$  small there exists a unique real-analytic function $u=u(\tau,\theta)$ given by the convergent series $u=\sum\limits_{n+m\ge1,n,m\ge0}\tilde c_{n,m}\tau^n\theta^n$ such that 
$F(u(\tau,\theta),\tau,\theta)\equiv0.$ Then 
$$
\alpha= \mu(c_{10} +u) = c_{10}\mu + \sum\limits_{n+m\ge1,n,m\ge0}\tilde c_{n,m}\mu^{n+1}\theta^n.
$$
Theorem is proved.
\end{proof}

\begin{proof}[Proof of Theorem \ref{teo:exp_e_atmax}]
From \eqref{v_kv_ning_ifodasi} it follows that the map $p\in\T^d\mapsto f(\vec\pi+p)$ is even. 
Now the  expansions of $e(\mu)$ at $\mu=-\mu^o$ can be proven along the same lines of Theorem \ref{teo:exp_e_at0} using Proposition \ref{prop:U_f_asymp} with $f=|v|^2$,
therefore we drop its proof.  
\end{proof}

\begin{remark}\label{rem:v_complex_case}
Let $\hat v:\Z^d\to\C$ satisfy \eqref{hat_v_ga_shart}. Since $\dispersion(\cdot)$ is even, 
$$
\int_{\T^d} \frac{|v(p)|^2dp}{\dispersion(p) -z} =\frac{1}{(2\pi)^d} \, \int_{\T^d} \frac{f(p)dp}{\dispersion(p) -z},
$$
where  
$$
\begin{aligned}
f(p):=\Big(\sum\limits_{x\in\Z^d} \hat v_1(x) \cos p\cdot x\Big)^2& + \Big(\sum\limits_{x\in\Z^d} \hat v_2(x) \cos p\cdot x\Big)^2\\ 
&+ \Big(\sum\limits_{x\in\Z^d} \hat v_1(x) \sin p\cdot x\Big)^2 + \Big(\sum\limits_{x\in\Z^d} \hat v_2(x) \sin p\cdot x\Big)^2
\end{aligned}
$$
and $\hat v= \hat v_1+ i\hat v_2$ for some $\hat v_1,\hat v_2:\Z^d \to\R.$ By Lemma \ref{lem:determinantga_kelish}, the unique eigenvalue $e(\mu)$ of $\hamiltonian_\mu$
solves 
$$
1-\mu\int_{\T^d} \frac{f(p)dp}{\dispersion(p) - e(\mu)} =0.
$$
Since both $p\in\T^d\mapsto  f(p)$ and  $p\in\T^d\mapsto  f(\vec\pi + p)$  are even analytic functions, we can still apply Propositions \ref{prop:l_f_asymp} and \ref{prop:U_f_asymp} to find the expansions of $z\mapsto \int_{\T^d} \frac{f(p)dp}{\dispersion(p) - z}$ and thus, repeating the same arguments of the proofs of Theorems \ref{teo:exp_e_at0} and \ref{teo:exp_e_atmax} one can obtain the corresponding expansions of $e(\mu).$ 
\end{remark}

\begin{remark}\label{rem:e_mu_asymp}
When 
$$
|\hat v(x)| = O(|x|^{2n_0+d+1})\qquad \text{as $|x|\to \infty$}
$$
for some $n_0\ge1,$ in view of Remark \ref{rem:asymp_low_f}, we need to solve equation \eqref{eq_wrt_tau0} with respect to $\mu$ using only that left-hand side is an asymptotic  sum (not a convergent series). This still can be done using appropriate modification of the  Implicit Function Theorem for differentiable functions. As a result, we obtain only (Taylor-type) asymptotics of $e(\mu).$ 
\end{remark}

\appendix 
\section{Asymptotics of some integrals}\label{sec:asymp_integr}

In this section we study the behaviour of the integral 
\begin{equation}\label{def_l_f}
\low_f(z):=\int_{\T^d} \frac{f(q)dq}{\dispersion(q) -z},\qquad z\in\C\setminus[0,4d^2], 
\end{equation}
as $z\to 0$ and $z\to 4d^2,$ where $f:\T^d\to\R$ is a real-analytic even function on $\T^d.$ 

\begin{proposition}
\label{prop:l_f_asymp}
Let $f:\T^d\to\R$ be a real-analytic even function 
such that 
\begin{equation}\label{condition_on_f}
f(0) = D^2f(0) = \ldots =D^{2n_o-2}f(0) = 0,\qquad D^{2n_o}(0)\ne0 
\end{equation}
for some $n_o\ge0.$ 
Then:
\begin{itemize}
\item[--] $\low_f$ is continuous at $0$ if and only if $2n_o+d\ge 5;$ 

\item[--] $\low_f$ is continuously differentiable if and only if  $2n+d\ge9,$  
in this case,
\begin{equation}\label{l_fning_hosilasi}
\low_f'(0):=\int_{\T^d} \frac{f(q) dq}{(\dispersion(q))^2}= \lim\limits_{z\searrow0} \int_{\T^d} \frac{f(q)dq}{(\dispersion(q) -z)^2}.   
\end{equation}
\end{itemize}
Moreover, for any $z\in (-\frac{1}{64},0):$
\begin{itemize}
\item[(a)] if $d$ is odd, then   
\begin{equation}\label{l_f_asymp_d_odd}
\low_f(z) = 
\begin{cases}
\frac{\pi}{4(-z)^{3/4}}\, \Big(c_f+ \sum\limits_{n\ge1} a_n^d\,(-z)^{n/4}\Big), & 2n_o+d=1,\\
\frac{\pi}{8(-z)^{1/4}}\, \Big(c_f+ \sum\limits_{n\ge1} a_n^d\,(-z)^{n/4}\Big), & 2n_o+d=3,\\
\low_f(0)-\frac{\pi(-z)^{1/4}}{8}\,\Big(c_f+ \sum\limits_{n\ge1} a_n^d\,(-z)^{n/4}\Big), & 2n_o+d=5,\\
\low_f(0)-\frac{\pi(-z)^{3/4}}{8}\,\Big(c_f+ \sum\limits_{n\ge1} a_n^d\,(-z)^{n/4}\Big), & 2n_o+d=7,\\
\low_f(0) + z\Big(\low_f'(0) + \sum\limits_{n\ge1} a_n^d\,(-z)^{n/4}\Big), & 2n_o+d\ge9,
\end{cases}
\end{equation}

\item[(b)] if $d$ is even, then   
\begin{equation}\label{l_f_asymp_d_even}
\low_f(z) = 
\begin{cases}
\frac{\pi}{8(-z)^{1/2}}\, \Big(c_f+ \sum\limits_{n\ge1} b_n^d\,(-z)^{n/2}\Big) -\frac{1}{16}\,\ln(-z)\sum\limits_{n\ge0} c_n^dz^n , & 2n_o+d=2,\\
 -\frac{1}{16}\,\ln(-z)\Big(c_f + \sum\limits_{n\ge1} c_n^dz^n\Big) +\sum\limits_{n\ge0} b_n^d\,(-z)^{n/2},& 2n_o+d=4,\\
\low_f(0)-\frac{\pi(-z)^{1/2}}{8}\, \Big(c_f+ \sum\limits_{n\ge1} b_n^d\,(-z)^{n/2}\Big) +z\ln(-z)\sum\limits_{n\ge0} c_n^dz^n , & 2n_o+d=6,\\
\low_f(0)- \frac{z}{16}\,\ln(-z)\Big(c_f + \sum\limits_{n\ge1} c_n^dz^n\Big) +\sum\limits_{n\ge2} b_n^d\,(-z)^{n/2},& 2n_o+d=8,\\
\low_f(0) + z\Big(\low_f'(0) + \sum\limits_{n\ge1} b_n^d\,(-z)^{n/2}\Big) + z^2\ln(-z)\sum\limits_{n\ge0} c_n^d\,z^n , & 2n_o+d\ge10,
\end{cases}
\end{equation}
\end{itemize}
where $\{a_n^d\},$ $\{b_n^d\}$ and $\{c_n^d\}$ are some real coefficients,
\begin{equation}\label{c0_and_c1}
c_f:=\frac{2^{2n_o+d}}{(2n_o)!}\, \int_{\S^{d-1}} D^{2n_o}f(0)[w,\ldots,w]d\cH^{d-1};
\end{equation}
and all series in \eqref{l_f_asymp_d_odd} and \eqref{l_f_asymp_d_even} converge absolutely for $z\in W_{1/64}(0)\subset\C.$ 
\end{proposition}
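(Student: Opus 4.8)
The plan is to isolate the singularity of the integrand, which as $z\to0$ comes only from the single zero of $\dispersion$ at $q=0$. First I would fix a small ball $B_\rho(0)$ and split $\low_f(z)=\low_f^{\mathrm{far}}(z)+\low_f^{\mathrm{near}}(z)$ into the integrals over $\T^d\setminus B_\rho$ and over $B_\rho$. On $\T^d\setminus B_\rho$ one has $\dispersion\ge\delta>0$, so $\low_f^{\mathrm{far}}$ extends to an analytic function of $z$ on a complex neighbourhood of $0$; its Taylor expansion supplies the values $\low_f(0)$, $\low_f'(0)$ (when finite) and the integer powers $z^n$ appearing in \eqref{l_f_asymp_d_odd}--\eqref{l_f_asymp_d_even}. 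All genuinely non-smooth behaviour must therefore come from $\low_f^{\mathrm{near}}$.

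Next I would normalise $\dispersion$ near $0$. Writing $s(q)=\sum_{i=1}^d(1-\cos q_i)$, we have $\dispersion=s^2$, and $s$ is a real-analytic even function with a nondegenerate minimum at $0$ whose Hessian is the identity. The real-analytic (equivariant) Morse lemma gives an analytic diffeomorphism $q=\phi(y)$ with $\phi(0)=0$, $\phi(-y)=-\phi(y)$, and $s(\phi(y))=\tfrac12|y|^2$, hence $\dispersion(\phi(y))=\tfrac14|y|^4$. Setting $g(y)=f(\phi(y))\,|\det D\phi(y)|$, which is even, real-analytic, and vanishes to order $2n_o$ at $0$ (since $D\phi(0)=\mathrm{Id}$, its $2n_o$-jet equals that of $f$), the near part becomes $\int_{B}\frac{g(y)\,dy}{\tfrac14|y|^4-z}$. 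Passing to polar coordinates $y=r\omega$ and integrating over $\S^{d-1}$ reduces this to the one-dimensional integral $\int_0^{r_0}\frac{r^{d-1}G(r)}{\tfrac14 r^4-z}\,dr$, where $G(r)=\int_{\S^{d-1}}g(r\omega)\,d\cH^{d-1}(\omega)=\sum_{k\ge n_o}G_{2k}r^{2k}$ is even and analytic, with leading coefficient $G_{2n_o}=\frac{1}{(2n_o)!}\int_{\S^{d-1}}D^{2n_o}f(0)[\omega,\dots,\omega]\,d\cH^{d-1}$, which will produce $c_f$.

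Then comes the core computation. I would analyse each monomial by the scaling $r=(4(-z))^{1/4}t$, which turns $\int_0^{r_0}\frac{r^{d-1+2k}}{\tfrac14 r^4-z}\,dr$ into $(4(-z))^{(d+2k)/4}(-z)^{-1}\int_0^{R}\frac{t^{d-1+2k}}{t^4+1}\,dt$ with $R=r_0(4(-z))^{-1/4}\to\infty$. Expanding $\frac{t^{d-1+2k}}{t^4+1}$ for large $t$, the polynomial-in-$R$ part recombines (after multiplying by the prefactor) into the integer powers $(-z)^j$, merging with $\low_f^{\mathrm{far}}$, while the finite/convergent part produces the singular term $(-z)^{(d+2k)/4-1}$. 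A logarithm $z^{j}\ln(-z)$ appears precisely when $(d+2k)/4-1$ is a nonnegative integer, i.e. when $d+2k\equiv0\pmod4$. For $d$ odd this never occurs, giving the pure $(-z)^{1/4}$-series of \eqref{l_f_asymp_d_odd}; for $d$ even the indices with $d+2k\equiv2\pmod4$ give half-integer powers and those with $d+2k\equiv0\pmod4$ give the logarithmic terms of \eqref{l_f_asymp_d_even}. Reading off $k=n_o$ gives the leading exponent $(d+2n_o)/4-1$, whence $\low_f$ is continuous at $0$ iff this exponent is $\ge0$ (that is $2n_o+d\ge5$) and $C^1$ iff it exceeds $1$ (that is $2n_o+d\ge9$); the explicit constants $\tfrac\pi4,\tfrac\pi8,-\tfrac\pi8,\dots$ follow from $\int_0^\infty\frac{t^{d-1+2n_o}}{t^4+1}\,dt=\frac{\pi}{4\sin(\pi(d+2n_o)/4)}$ (and its finite-part and derivative analogues in the logarithmic and regular cases), together with the scaling factor $4^{(d+2n_o)/4}$ that yields the $2^{2n_o+d}$ in $c_f$.

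The step I expect to be the main obstacle is upgrading these term-by-term asymptotics into the genuinely \emph{convergent} expansions asserted, uniformly for $z$ in the complex disc $W_{1/64}(0)$, rather than mere asymptotic series. This requires justifying the interchange of $\sum_{k\ge n_o}$ with the integral and controlling the tail---e.g. by splitting $G$ into a Taylor polynomial plus a remainder $r^{2N}\tilde G_N(r)$ whose integral is manifestly analytic of the right regularity---and then verifying that the mixed system of fractional powers and logarithms assembles into absolutely convergent series in the stated variables with a common radius of convergence, using the analyticity of $\low_f$ off $[0,4d^2]$ to pin down the complex neighbourhood. Tracking the exact leading coefficients and the $c_f$-normalisation through the Morse change of variables is the remaining delicate, though routine, bookkeeping.
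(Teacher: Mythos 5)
Your proposal is correct and follows essentially the same route as the paper's proof: the paper likewise splits $\low_f$ into near/far parts, flattens $\dispersion$ with the explicit odd change of variables $\varphi_i(y)=2\arcsin y_i$ (giving $\dispersion\circ\varphi=4|y|^4$, with no need for your appeal to an equivariant analytic Morse lemma), passes to polar coordinates, and reduces everything to the one-dimensional integrals $\singpart_m(z)=\int_0^\gamma r^m(4r^4-z)^{-1}dr$, whose singular parts $(-z)^{-3/4},(-z)^{-1/2},(-z)^{-1/4},\ln(-z)$ are exactly what your scaling argument produces. The resummation you flag as the main obstacle is resolved in the paper by observing that the partial sums are analytic in $\C\setminus[0,4]$ and converge pointwise, hence (being equibounded) locally uniformly by a Vitali-type theorem, which allows the fractional-power/logarithmic pieces and the analytic remainders to be collected into the asserted absolutely convergent series.
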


\begin{proof}
Given $\gamma\in(0,\frac{1}{\sqrt2}],$ let $\varphi:B_{\gamma}(0)\subset\R^d\to \varphi(B_{\gamma}(0))\subset\R^d$ be the smooth diffeomorphism 
\begin{equation}\label{morse_map}
\varphi_i(y) = 2 \arcsin y_i,\qquad i=1,\ldots,d.
\end{equation}
Note that
\begin{equation}\label{morse_ch}
\dispersion(\varphi(y)) = \Big(\sum\limits_{i=1}^d (1 - \cos(2\arcsin(y_i)) )\Big)^2 = 4\Big(\sum\limits_{i=1}^d y_i^2\Big)^2 =4 y^4, 
\end{equation}
therefore,  
\begin{equation}\label{lb_for_disp}
\dispersion(q)\ge 4 \gamma^4 \qquad \text{for any $q\in  \T^d\setminus \varphi(B_{\gamma})$}.
\end{equation}
We rewrite $\low_f(z)$ as  
%
\begin{equation}\label{ufa_exp}
\low_f(z): = \int_{\varphi(B_{\gamma}(0))} \frac{f(q)dq}{\dispersion(q) -z} + \int_{\T^d\setminus \varphi(B_{\gamma}(0))} \frac{f(q)dq}{\dispersion(q) -z}:=\low^*(z) +\low^{**}(z). 
\end{equation}
By virtue of \eqref{lb_for_disp}, the series $\sum\limits_{n\ge0} (\frac {z}{\dispersion(q)})^n$ is absolutely convergent  for  $|z|<2\gamma^4$ and $q\in \T^d\setminus \varphi(B_{\gamma}),$ thus, 
\begin{equation}\label{I_2_a_analytic}
\low^{**}(z) = \int_{\T^d\setminus \varphi(B_{\gamma}(0))} \frac{f(q)}{\dispersion(q)}\,\Big(1-\frac{z}{\dispersion(q)}\Big)^{-1}dq  =\sum\limits_{n\ge0}  z^{n} \int_{\T^d\setminus \varphi(B_{\gamma}(0))} \frac{f(q)dq}{(\dispersion(q))^{n+1}}, 
\end{equation}
i.e. $\low^{**}(\cdot)$ is  analytic in $W_{2\gamma^4}(0).$ 
In $\low^*$ making the change of variables $q = \varphi(y)$ and using \eqref{morse_ch} we get  
\begin{equation}\label{I_1_a_aaa}
\low^*(z) = \int_{B_{\gamma}(0)} \frac{f(\varphi(y))\, J(\varphi(y))\,dy}{4y^4 -z}, 
\end{equation}
where $y^4:=(y^2)^2$ with $y^2:=\sum\limits_{i=1}^d y_i^2$, and   
\begin{equation}\label{jacoba}
J(\varphi(y)) = \prod\limits_{i=1}^d \frac{2}{\sqrt{1-y_i^2}} 
\end{equation}
is the Jacobian of $\varphi.$ 
%
Since $f$ is an even analytic function satisfying \eqref{condition_on_f}, even each coordinate,  from the Taylor series for $f$ it follows that
\begin{equation}\label{f_anal_qator}
f(p) = \sum\limits_{n\ge n_o} \frac{1}{(2n)!}\,D^{2n}f(0)[\underbrace{p,\ldots,p}_{\text{$2n$-times}}],  
\end{equation}
and by the analyticity of $f$ in $B_\pi(0)\subset\R^d,$ the series converges absolutely in $p\in B_\pi(0).$ 
By the definition of $\varphi,$ $\varphi(rw)\subset B_\pi(0)$ for any $r\in (0,\gamma)$ and $w=(w_1,\ldots,w_d)\in \S^{d-1},$ where $\S^{d-1}$ is the unit sphere in $\R^d.$  Then letting $p=\varphi(rw)$ and using the Taylor series 
\begin{equation}
\varphi_i(rw)= 2rw_i + \frac{r^3w_i^3}{3} +\sum\limits_{n\ge3} \tilde c_n r^{2n-1}w_i^{2n-1} 
\label{taylor_arcsin} 
\end{equation}
of $2\arcsin(\cdot),$  
which is absolutely  convergent for $|rw_i|<1,$
from \eqref{f_anal_qator} we obtain
\begin{equation}\label{f_ays}
f(\varphi(rw))= \sum\limits_{n\ge n_o} \tilde C_n(w)\,r^{2n}, 
\end{equation}
where $\tilde C_n:\S^{d-1}\to \R$ is a homogeneous   polynomial of $w\in\S^{d-1}$ of degree $2n,$ and 
$$
\tilde C_{n_o}(w) = \frac{2^{2n_o}}{(2n_o)!}\,D^{2n_o}f(0)[\underbrace{w,\ldots,w}_{\text{$2n_o$-times}}]
$$

Next consider 
$J(\varphi(y)).$ Inserting the Taylor series  
$$
\frac{1}{\sqrt{1-t}} = 1 + \frac{t}{2} + \frac{3t^2}{8} + \ldots,\qquad |t|<1,
$$
into  \eqref{jacoba} we obtain
\begin{equation}\label{j_ays}
J(\varphi(rw)) = 2^d\Big(1 +  \sum\limits_{n\ge1} \hat C_n(w)r^{2n}\Big), 
\end{equation}
where $\hat C_n:\S^{d-1}\to \R$ is a homogeneous symmetric polynomial of $w\in\S^{d-1}$ of degree $2n,$ and the series converges absolutely.

Now passing to polar coordinates by $y=rw$ in \eqref{I_1_a_aaa} and using \eqref{f_ays} and \eqref{j_ays} as well as the absolute convergence of the series we get 
\begin{align}
\low^*(z) =&  2^d\int_0^\gamma \frac{r^{d-1}}{4r^4-z}\, \Big(\sum\limits_{n\ge n_o} \int_{\S^{d-1}}  C_n(w)\,r^{2n}\Big)d\cH^{d-1}dr 
= \sum\limits_{n\ge n_o} \hat c_n \int_0^\gamma \frac{r^{2n+d-1}}{4r^4-z},\label{pizettiga_kel} 
\end{align}
where $C_n:\S^{d-1}\to \R$ is a homogeneous  polynomial of $w\in\S^{d-1}$ of degree $2n$ 
 and 
the coefficients $\{\hat c_n\}$ are real and given by
$$
\hat c_n:=2^d\int_{\S^{d-1}} C_n(w)d\cH^{d-1}. 
$$
Note that $\hat c_{n_o}=c_f,$ where $c_f$ is given by \eqref{c0_and_c1} and  the last series in \eqref{pizettiga_kel}
uniformly converges in any compact subset of $\C\setminus[0,4]$ since $\low^*$ and 
$$
z\in\C\setminus[0,4]\mapsto \singpart_{2n+d-1}(z):=\int_0^\gamma \frac{r^{2n+d-1}}{4r^4-z}
$$ 
are analytic functions in $\C\setminus[0,4]$ and all series in \eqref{pizettiga_kel} converge  pointwise\footnote{If $\{h_n\}$ is an equi-bounded sequence of  analytic functions in a connected open set $\Omega\subset \C$  converging pointwise to a function  $h:\Omega\to\C,$ then $h$ is analytic and $h_n$ converges uniformly to $h$ in compact subsets of $\Omega.$}.
By a direct computation one can readily check that for any $m\in\N_0,$  
there exist $c_m\in\R$  and an analytic function $f_m$ in the ball $W_{\gamma^4}(0)\subset\C$ such that for any $z\in (-\gamma^4,0),$
\begin{equation}\label{singpart_exp}
\singpart_m(z) = z^n\,\, \singpart_l^o(z) + c_m+
z^\nu f_m((-z)^{1/2}), 
\end{equation}
where $n$ is the integer part of $m/4,$ i.e. $n:=[\frac{m}{4}],$  $l:=m-4n\in\{0,1,2,3\},$ 
$\nu=\frac12$ for $m=0,2$ and $\nu=1$ for $m=1,3$ or
$m\ge4,$
and 
\begin{equation}\label{singular_part_J}
\singpart_l^o (z):=
\begin{cases}
\frac{\pi}{4}\,(-z)^{-3/4} &\qquad  \text{if $\quad l=0$},\\
\frac{\pi}{8}\,(-z)^{-1/2}  &\qquad  \text{if $\quad l=1,$}\\
\frac{\pi}{8}\,(-z)^{-1/4}  & \qquad \text{if $\quad l=2$},\\
-\frac{1}{16}\,\ln(-z)  & \qquad \text{if $\quad l=3.$}
\end{cases}
\end{equation}
Inserting \eqref{singpart_exp} into \eqref{pizettiga_kel} we obtain 
\begin{align*}
\low^*(z) = 
\sum\limits_{n\ge n_o} \hat c_n\Big(z^{[\frac{2n+d-1}{4}]}\,  \singpart_{2n+d-1 - 4[\frac{2n+d-1}{4}]}^o &(z) + c_{2n+d-1} 
+\hat c_n (-z)^{\nu_n}f_{2n+d-1}((-z)^{1/2})\Big),
\end{align*}
where $\{c_{2n+d-1}\}\subset\R$ and  $\{f_{2n+d-1}\}$ is a sequence of analytic functions in $W_{\gamma^4}(0)$ and 
$$
\nu_n:=
\begin{cases}
\frac12, & 2n+d=1,3,\\
1, & \text{otherwise}.
\end{cases}
$$
Since \eqref{pizettiga_kel} converges locally uniformly in $\C\setminus[0,4],$ 
$$
C:=\sum\limits_{n\ge n_o} \hat c_n c_{2n+d-1}
$$
is finite and 
$$
\sum\limits_{n\ge n_o} \hat c_n (-z)^{\nu_n} f_{2n+d-1}((-z)^{1/2})= (-z)^\nu g((-z)^{1/2}),
$$
where  $g$ is analytic in $W_{\gamma^2}(0)$ and $\nu=\frac12$ for $2n_o+d=1,3$ and $\nu=1$ otherwise. Hence, 
\begin{align}
\low^*(z) =C+ (-z)^\nu g((-z)^{1/2}) +
\sum\limits_{n\ge n_o} \hat c_n\,z^{[\frac{2n+d-1}{4}]}\,  \singpart_{2n+d-1 - 4[\frac{2n+d-1}{4}]}^o (z),\label{first_attempt}
\end{align}

If $0\le 2n_o+d-1\le 3,$ then by \eqref{first_attempt},
\begin{align}
\low^*(z) =  C+ (-z)^{\nu}g((-z)^{1/2})  + &\hat c_{n_o} \singpart_{2n_o+d-1}^o(z)\no \\
+ &\sum\limits_{n\ge n_o+1} \hat c_n z^{[\frac{2n+d-1}{4}]}\, \singpart_{2n+d-1 - 4[\frac{2n+d-1}{4}]}^o(z).   \label{sdjh}
\end{align}
In view of \eqref{I_2_a_analytic} and the definition of $\singpart_l^o,$ from \eqref{sdjh} we obtain the expansions \eqref{l_f_asymp_d_odd} and \eqref{l_f_asymp_d_even} of $\low_f$ for $2n_o+d\le4.$ In particular, 
since $[\frac{2n+d-1}{4}]\ge1$ for $n\ge n_o+1,$ 
letting $z\to0$ in \eqref{sdjh} we get
\begin{equation}\label{int_cheksizlik}
\lim\limits_{z\to0} \low^*(z) =+\infty.
\end{equation}

If $2n_o+d-1\ge 4,$ then  
$[\frac{2n+d-1}{4}]\ge1$ for any $n\ge n_o.$ Therefore, by \eqref{first_attempt}, 
$\low^*(0):=\lim\limits_{z\to0} \low^*(z)$ exists and equals to $C.$ In particular, for $2n_o+d-1\le 7,$ one has
\begin{align}
\low^*(z) =  \low^*(0) -z g((-z)^{1/2})  + &\hat c_{n_o} z \singpart_{2n_o+d-1}^o(z) \no \\
+ &\sum\limits_{n\ge n_o+1} \hat c_n z^{[\frac{2n+d-1}{4}]}\, \singpart_{2n+d-1 - 4[\frac{2n+d-1}{4}]}^o(z), \label{yoyilmadi}  
\end{align}
from which and \eqref{I_2_a_analytic} we deduce the expansions \eqref{l_f_asymp_d_odd} and \eqref{l_f_asymp_d_even} of $\low_f$ for  
$5\le 2n_o+d\le 8.$ In particular, by virtue of \eqref{int_cheksizlik} and analyticity of $\low^{**}$ at $z=0,$   $\low_f$ is continous at $0$ if and only if $2n_o+d\ge 5.$ 
Notice also by \eqref{yoyilmadi} 
\begin{equation}\label{i_f_diffmas}
\lim\limits_{z\to 0 } \frac{\low^*(z) - \low^*(0)}{z} =+\infty,
\end{equation}
i.e. $\low^*$ (and hence $\low_f$) is not differentiable at $z=0.$ 

Finally, if $2n_o+d-1\ge8,$ then 
$[\frac{2n+d-1}{4}]\ge2$ for any $n\ge n_o.$ Therefore, by \eqref{first_attempt} there exists
$$
{\low^*}'(0):=\lim\limits_{z\to 0 } \frac{\low^*(z) - \low^*(0)}{z} =- g(0). 
$$
Now using the Taylor series of  $g$ at $0$ we get 
$$
zg((-z)^{1/2})= {\low^*}'(0) z +  z\sum\limits_{n\ge1}\frac{g^{(n)}(0)}{n!}\, (-z)^{n/2}.
$$
Inserting this in \eqref{first_attempt}, using the definition of $\singpart_l^o$ and the analyticity of $\low^{**}$ we get the expansions \eqref{l_f_asymp_d_odd} and \eqref{l_f_asymp_d_even} of $\low_f$ for  $2n_o+d\ge 9.$

By \eqref{int_cheksizlik} and \eqref{i_f_diffmas}, $\low_f$ is continously differentiable at $0$ if and only if $2n_o+d\ge9.$

Now the choice $\gamma=\frac{1}{\sqrt{2}}$ completes the proof.
\end{proof}

\begin{proposition} \label{prop:U_f_asymp}
Let $f:\T^d\to\R$ be a real-analytic function   
such that $q\in\T^d\mapsto f(\vec\pi+q)$ is even 
and 
\begin{equation}\label{f_assumption}
f(\vec\pi) = D^2f(\vec\pi) = \ldots =D^{2n_o-2}f(\vec\pi) = 0,\qquad D^{2n_o}(\vec\pi)\ne0 
\end{equation}
for some $n_o\in\N_0.$
Then:
\begin{itemize}
\item[--] $\low_f$ is continuous at $z=4d^2$ if and only if for $2n_o+d\ge 3,$  

\item[--] $\low_f$ is continuously differentiable  at $z=4d^2$ if and only if  for $2n_o+d\ge 5,$ in this case  
\begin{equation}\label{U_fning_hosilasi}
\low_f'(4d^2):=\int_{\T^d} \frac{f(q) dq}{(\dispersion(q)-4d^2)^2}= \lim\limits_{z\searrow4d^2} \int_{\T^d} \frac{f(q)dq}{(\dispersion(q) - z)^2}   
\end{equation}
exists.  
\end{itemize}
Moreover, if $z-4d^2\in (0,\frac{1}{16}),$  
$\low_f(z)$ is represented as:
\begin{itemize}
\item[(a)] if $d$ is odd, then 
\begin{equation}\label{asymp_u_f_d_odd}
\low_f(z) =
\begin{cases}
-\frac{\pi C_f}{\sqrt{z-4d^2}} + \sum\limits_{k\ge 0} a_k^d(z-4d^2)^{k/2},   & 2n_o+d=1,\\
\low_f(4d^2) + \pi C_f\,\sqrt{z-4d^2} + \sum\limits_{k\ge 2} a_k^d(z-4d^2)^{k/2},   & 2n_o+d=3,\\
\low_f(4d^2) + \low_f'(4d^2)\,(z-4d^2) + \sum\limits_{k\ge 3} a_k^d(z-4d^2)^{k/2},   & 2n_o+d\ge5;
\end{cases}
\end{equation} 

\item[(b)] if $d$ is even, then 
\begin{equation}\label{asymp_u_f_d_even}
\low_f(z) =
\begin{cases}
 C_f \ln\alpha  + \ln\alpha \sum\limits_{k\ge 1} b_k^d \alpha^k
+ \sum\limits_{k\ge0} c_k^d\alpha^k,  & 2n_o+d=2,\\
\low_f(4d^2) - C_f\alpha  \ln\alpha  + \ln\alpha \sum\limits_{k\ge 2} b_k^d \alpha^k
+ \sum\limits_{k\ge1} c_k^d\alpha^k,  & 2n_o+d=4,\\ 
\low_f(4d^2) + \low_f'(4d^2)\,\alpha + \ln\alpha \sum\limits_{k\ge 2} b_k^d \alpha^k
+ \sum\limits_{k\ge2} c_k^d\alpha^k, & 2n_o+d\ge6,
\end{cases}
\end{equation}  
\end{itemize}
where $\alpha:=z-4d^2,$ $\{a_k^d\},\{b_k^d\},\{c_k^d\}\subset\R$ and  
\begin{equation}\label{c_f_max}
C_f:= \frac{2^{2n_o+d-1}}{(8d)^{n_o+d/2}\,(2n_o)!}\,\int_{\S^{d-1}} D^{2n_o}f(\vec\pi)[w,\ldots,w]d\cH^{d-1}. 
\end{equation}
\end{proposition}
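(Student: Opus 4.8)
The plan is to run the proof of Proposition~\ref{prop:l_f_asymp} almost verbatim, replacing the degenerate minimum of $\dispersion$ at $0$ by the \emph{non-degenerate} maximum at $\vec\pi$, where $\dispersion(\vec\pi)=4d^2$ is the unique maximum on $\T^d$. First I would fix a small $\gamma>0$ and split
$$
\low_f(z)=\int_{\vec\pi+\varphi(B_\gamma(0))}\frac{f(q)\,dq}{\dispersion(q)-z}+\int_{\T^d\setminus(\vec\pi+\varphi(B_\gamma(0)))}\frac{f(q)\,dq}{\dispersion(q)-z}=:\low^*(z)+\low^{**}(z),
$$
where $\varphi$ is the diffeomorphism \eqref{morse_map}. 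Exactly as in \eqref{I_2_a_analytic}, since $4d^2-\dispersion$ is bounded below by a positive constant off a neighbourhood of $\vec\pi$, the remainder $\low^{**}$ extends analytically across $z=4d^2$, so all singular behaviour is carried by $\low^*$.

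Next I would expand $\dispersion$ near $\vec\pi$. Writing $p=\vec\pi+\varphi(y)$ and using $\cos(\pi+t)=-\cos t$ together with $\cos(2\arcsin y_i)=1-2y_i^2$ gives $1+\cos\varphi_i(y)=2(1-y_i^2)$, hence
$$
\dispersion(\vec\pi+\varphi(y))=\big(2d-2|y|^2\big)^2=4d^2-8d\,|y|^2+4|y|^4.
$$
Thus, after the change of variables $q=\vec\pi+\varphi(y)$, the denominator equals $-(\alpha+8d|y|^2-4|y|^4)$ with $\alpha:=z-4d^2>0$, a genuine Morse (quadratic) form in $y$ to leading order; this quadratic, rather than quartic, vanishing is the structural source of the half-integer powers of $\alpha$ in \eqref{asymp_u_f_d_odd}, of the logarithms in \eqref{asymp_u_f_d_even}, and of the factor $(8d)^{n_o+d/2}$ in $C_f$. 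Using that $q\mapsto f(\vec\pi+q)$ is even together with \eqref{f_assumption}, the analogues of \eqref{f_ays} and \eqref{j_ays} yield $f(\vec\pi+\varphi(rw))=\sum_{n\ge n_o}\tilde C_n(w)r^{2n}$ and $J(\varphi(rw))=2^d\big(1+\sum_{n\ge1}\hat C_n(w)r^{2n}\big)$, with leading coefficient $\tilde C_{n_o}(w)=\tfrac{2^{2n_o}}{(2n_o)!}D^{2n_o}f(\vec\pi)[w,\dots,w]$. Passing to polar coordinates $y=rw$ and integrating over $\S^{d-1}$, I reduce $\low^*$ to a locally uniformly convergent series
$$
\low^*(z)=-\sum_{n\ge n_o}\hat c_n\,\singint_{2n+d-1}(\alpha),\qquad
\singint_m(\alpha):=\int_0^\gamma\frac{r^m\,dr}{\alpha+8d\,r^2-4r^4},
$$
with $\hat c_{n_o}=\tfrac{2^{2n_o+d}}{(2n_o)!}\int_{\S^{d-1}}D^{2n_o}f(\vec\pi)[w,\dots,w]\,d\cH^{d-1}$.

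The heart of the matter is an expansion lemma for $\singint_m$ playing the role of \eqref{singpart_exp}. The algebraic identity $\tfrac{r^2}{\alpha+8dr^2}=\tfrac{1}{8d}-\tfrac{\alpha}{8d}\cdot\tfrac{1}{\alpha+8dr^2}$ yields the recursion $\singint_{m+2}=\tfrac{\gamma^{m+1}}{8d(m+1)}-\tfrac{\alpha}{8d}\,\singint_m+(\text{analytic})$, so that for every $m\in\N_0$ there is a decomposition
$$
\singint_m(\alpha)=\Big(-\frac{\alpha}{8d}\Big)^{\lfloor m/2\rfloor}\singint^o_{m-2\lfloor m/2\rfloor}(\alpha)+(\text{const})+\alpha^{\nu}\,g_m(\sqrt\alpha),
$$
with $g_m$ analytic near $0$, $\singint^o_0(\alpha)=\tfrac{\pi}{2\sqrt{8d}}\,\alpha^{-1/2}$ and $\singint^o_1(\alpha)=-\tfrac{1}{16d}\ln\alpha$; the two base cases $m=0,1$ are computed directly via $\arctan$ and $\log$. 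For $d$ odd the leading index $m=2n_o+d-1$ is even, producing the powers $\alpha^{k/2}$ of \eqref{asymp_u_f_d_odd}, while for $d$ even it is odd, producing the $\ln\alpha$ terms of \eqref{asymp_u_f_d_even}; matching the coefficient of the leading singularity of $-\hat c_{n_o}\singint_{2n_o+d-1}$ gives precisely $C_f=\hat c_{n_o}/\big(2(8d)^{n_o+d/2}\big)$, i.e. \eqref{c_f_max}. Summing these decompositions over $n$ — justified, as in Proposition~\ref{prop:l_f_asymp}, by the equi-boundedness and pointwise-convergence principle for analytic functions — and adding the analytic contribution of $\low^{**}$ yields \eqref{asymp_u_f_d_odd} and \eqref{asymp_u_f_d_even}. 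Reading off the leading singular term — of order $\alpha^{n_o+d/2-1}$ for $d$ odd and $\alpha^{n_o+d/2-1}\ln\alpha$ for $d$ even — gives continuity at $4d^2$ iff $2n_o+d\ge3$ and continuous differentiability iff $2n_o+d\ge5$, and in the latter case \eqref{U_fning_hosilasi} follows by identifying the coefficient of $\alpha$ (equivalently, by dominated convergence once integrability of $f/(\dispersion-4d^2)^2$ is known).

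I expect the main obstacle to be the bookkeeping in the lemma for $\singint_m$. Unlike the quartic denominator $4r^4-z$ of Proposition~\ref{prop:l_f_asymp}, here $\alpha+8dr^2-4r^4$ factors as $-4(r^2-r_+^2)(r^2-r_-^2)$ with $r_+^2\to 2d$ and $r_-^2\to 0^-$ as $\alpha\to0^+$; one must check that for $\gamma$ small the root $r_+$ lies outside $(0,\gamma)$, so that only the zero accumulating at $r=0$ contributes singular terms, and that the $-4r^4$ correction feeds only the analytic remainder $g_m$ and the higher coefficients $\{a_k^d\},\{b_k^d\},\{c_k^d\}$, never the leading constant $C_f$. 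Pinning down the normalising constants $\tfrac{\pi}{2\sqrt{8d}}$ and $\tfrac{1}{16d}$, and hence the exponent $n_o+d/2$ of $8d$ in \eqref{c_f_max}, is the one genuinely new computation relative to Proposition~\ref{prop:l_f_asymp}.
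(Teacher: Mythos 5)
Your proposal is correct and takes essentially the same route as the paper: localize at $\vec\pi$, apply the change of variables \eqref{morse_map} (after which the denominator becomes $-(\alpha+8dr^2-4r^4)$, a non-degenerate quadratic plus a quartic correction), pass to polar coordinates and integrate over the sphere, and split off the singular factor via the roots $r_\pm^2=d\pm\sqrt{d^2+\alpha/4}$ — this factorization is exactly the paper's partial-fraction step preceding \eqref{gahets}, which confines all singular behaviour to the factor $(r^2+\beta)^{-1}$ with $\beta=\sqrt{d^2+\alpha/4}-d$. The only difference is presentational: the paper imports the one-dimensional expansion of $\int_0^\gamma r^m(r^2+\beta)^{-1}\,dr$ from \cite[Eq. 4.3]{LH:2012} rather than re-deriving it by your recursion-plus-base-cases argument, and your constant bookkeeping $C_f=\hat c_{n_o}/\bigl(2(8d)^{n_o+d/2}\bigr)$ indeed reproduces \eqref{c_f_max}.
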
 

\begin{proof} 
Since $4d^2-\dispersion(\cdot)$ has a unique non-degenerate minimum at $\vec\pi,$
the asymptotics of $\low_f(z)$ as $z\searrow4d^2$ can be done along the lines of, for instance, \cite[Lemma 4.1]{LH:2012}, hence we only sketch the proof. 
First using 
$$
\dispersion(q)^2-4d^2 =  - \sum (3 - \cos q_i)\sum (1+ \cos q_i)
$$
and 
the change of variables $q\mapsto \vec\pi +q$ in the integral, we represent $\low_f$ as
$$
\low_f(z) = -\int_{\T^d} \frac{f(\vec\pi +q)dq}{ \sum (3+ \cos q_i)\sum (1 - \cos q_i) + z- 4d^2}.
$$
For $\gamma\in(0,\frac{1}{\sqrt2})$ and $\varphi,$ given by \eqref{morse_map}, 
$$
\low_f(z) =- \up_1(z)-\up_2(z),
$$
where 
$$
\up_2(z): =  \int_{\T^d\setminus \varphi(B_\gamma(0))} \frac{f(\vec\pi +q)dq}{ \sum (3+ \cos q_i)\sum (1 - \cos q_i)+ z- 4d^2}
$$
is analytic in in the ball  $W_{\gamma/\sqrt2}(4d^2)\subset\C$ centered at $4d^2,$ since 
$$
 \sum (3+ \cos q_i)\sum (1 - \cos q_i)\ge 2(2d-\gamma^2)\gamma^2 \ge 2\gamma^2 
$$
(see e.g. the proof of analyticity of $\low^{**}$ in \eqref{I_2_a_analytic}), and 
$$
\up_1(z): =  \int_{\varphi(B_\gamma(0))} \frac{f(\vec\pi +q)dq}{ \sum (3+ \cos q_i)\sum (1 - \cos q_i)+z- 4d^2}.
$$

In  $\up_1,$ first using the change of variables $q=\phi(y)$  and then passing to polar coordinates $y=rw$ we get
$$
\up_1(z) = \frac14\int_0^\gamma \int_{\S^{d-1}} \frac{r^{d-1}\,f(\vec\pi + \phi(rw)) J(\phi(rw))d\cH^{d-1}dr}{(2d-r^2)r^2 +a^2},\qquad a:=\frac{z- 4d^2}{4}.
$$
Note that 
$$
\frac{1}{(2d-r^2)r^2 + a^2} =\frac{1}{2\sqrt{d^2+a^2}}\, \Big(\frac{1}{d+\sqrt{d^2+a^2} - r^2} + \frac{1}{r^2 + \sqrt{d^2+a^2} - d }\Big) 
$$
and $d^2+a^2 = \frac{z}{4},$
hence,
$$
\up_1(z) = \up_1^*(z)+ \up_1^{**}(z),
$$
where 
$$
\up_1^{**}(z):=\frac{1}{2\sqrt z}\, \int_0^\gamma \int_{\S^{d-1}} \frac{r^{d-1}\,f(\vec\pi + \phi(rw)) J(\phi(rw))d\cH^{d-1}dr}{2d+\sqrt{z} - 2r^2}
$$
is analytic in $W_\gamma(4d^2)$ and 
$$
\up_1^*(z):=\frac{1}{4\sqrt{z}}\, \int_0^\gamma \int_{\S^{d-1}} \frac{r^{d-1}\,f(\vec\pi + \phi(rw)) J(\phi(rw))d\cH^{d-1}dr}{r^2 + \frac{\sqrt{z} - 2d}{2}}.
$$
Since $f$ analytic and  $y\mapsto f(\vec\pi + y)$ is even 
from the Taylor series of $f$ at $\vec\pi,$ \eqref{f_assumption} and \eqref{taylor_arcsin}  we obtain 
\begin{equation}\label{f_ays_1}
f(\vec\pi + \phi(rw)) = \sum\limits_{n\ge n_o} \frac{1}{(2n)!}\,D^{2n}f(\vec\pi)[\phi(rw),\ldots,\phi(rw)] =\sum\limits_{n\ge n_o} \tilde C_n(w)r^{2n},
\end{equation}
where $\tilde C_n:\S^{d-1}\to\R$ is a homogeneous polynomial of $w\in\S^{d-1}$ of degree $2n$ and 
$$
\tilde C_{n_o} = \frac{2^{2n_o}}{(2n_o)!}D^{2n_o}f(\vec\pi)[w,\ldots,w]
$$
Inserting \eqref{f_ays_1} and the expansion \eqref{j_ays} of $J(\phi(rw))$ in $\up_1^*$ we get 
\begin{align}\label{gahets}
\up_1^*(z):= 
\frac{2^{d-2}}{\sqrt z}\, & \sum\limits_{n\ge n_o} \tilde c_n \int_0^\gamma  \frac{r^{d-1+2n} dr} {r^2 + \frac{\sqrt z - 2d}{2}},
\end{align}
where $\{\tilde c_n\}\subset \R$ and $\tilde c_{n_o} = \int_{\S^{d-1}} \tilde C_{n_o}(w)d\cH^{d-1}.$ 
Note that 
$
\frac{\sqrt z - 2d}{2} = \frac{z - 4d^2}{2\sqrt z + 4d}
$ 
and by \cite[Eq. 4.3]{LH:2012} for any $m\in\N_0,$
\begin{equation}\label{asdas}
\int_0^\gamma \frac{r^m dr}{r^2 + \frac{\sqrt z - 2d}{2}}  = \hat g_m(z) +
\begin{cases}
\frac{\pi\sqrt{2\sqrt z + 4d}}{2\sqrt{z - 4d^2}}\,(-\frac{z - 4d^2}{2\sqrt z + 4d})^k  & \text{if $m=2k,$}\\[2mm]
-\frac12\,(-\frac{z - 4d^2}{2\sqrt z + 4d})^k \ln(z - 4d^2)  &\text{if $m=2k+1,$} 
\end{cases} 
\end{equation}
where  $\hat g(\cdot)$ is analytic in $W_{\gamma^2}(0).$
Then using the analyticity of $\sqrt{z}$ and $\frac{1}{2\sqrt z+4d}$ in $W_d(4d^2),$  we rewrite \eqref{gahets} as 
$$
\up_1^*(z) = \frac{(4d^2-z)^{n_o+(d-1)/2}}{\sqrt{z-4d^2}}\,  \sum\limits_{k\ge 0} \tilde a_k(z-4d^2)^{k} +\sum\limits_{k\ge 0} \hat a_k(z-4d^2)^{k} 
$$
if $d$ is odd, and 
$$
\up_1^*(z)  = - (4d^2-z)^{n_o+d/2-1}\,\ln(z-4d^2) \sum\limits_{k\ge 0} \tilde b_k (z-4d^2)^k
+ \sum\limits_{k\ge0} \hat b_k(z-4d^2)^k 
$$
if $d$ is even, where $\{\tilde a_k\},\{\tilde b_k\},\{\hat a_k\},\{\hat b_k\}\subset\R,$ $\tilde a_0=\pi C_f$ $\tilde b_0=C_f,$ and all series absolutely converge in $W_{\gamma^2/4}(4d^2)\subset\C$ (for $\sqrt{\cdot}$-function we choose the branch satisfying $\sqrt{1}=1$).  
Since 
$$
\low_f(z) = -\up_1^*(z) - \up_1^{**}(z) - \up_2(z)
$$ 
and $ \up_1^{**}(\cdot)$ and $\up_2(\cdot)$ are analytic in $W_{\gamma}(4d^2),$ 
\begin{equation}\label{u_f_d_odd}
\low_f(z) = -\frac{(4d^2-z)^{n_o+(d-1)/2}}{\sqrt{z-4d^2}}\,  \sum\limits_{k\ge 0} \tilde a_k(z-4d^2)^{k} +\sum\limits_{k\ge 0}  a_k(z-4d^2)^{k}
\end{equation}
if $d$ is odd, and 
\begin{equation}\label{u_f_d_even}
\low_f(z)  = (4d^2-z)^{n_o+d/2-1}\,\ln(z-4d^2) \sum\limits_{k\ge 0} \tilde b_k (z-4d^2)^k
+ \sum\limits_{k\ge0} b_k(z-4d^2)^k 
\end{equation}
if $d$ is even, where $\{a_k\},\{b_k\} \subset\R,$  and supposing that $\sqrt{1}=1,$ 
all series absolutely converge in $W_{\gamma^2/4}(4d^2)\subset\C$ . 

{\it Case $2n_o+d=1,2.$} By \eqref{u_f_d_odd} and \eqref{u_f_d_even} we get 
$$
\low_f(z) = -\frac{\pi C_f}{\sqrt{z-4d^2}} + \sum\limits_{k\ge 0} a_k^d(z-4d^2)^{k/2}, 
$$
and 
$$
\low_f(z)  =  C_f \ln(z-4d^2)  + \ln(z-4d^2) \sum\limits_{k\ge 1} b_k^d (z-4d^2)^k
+ \sum\limits_{k\ge0} c_k^d(z-4d^2)^k, 
$$
where $\{a_k^d\},\{b_k^d\},\{c_k^d\}\subset\R.$ In particular, 
$$
C_f\,\lim\limits_{z\searrow 4d^2} \low_f(z) =-\infty.
$$

{\it Case $2n_o+d=3,4.$}
Notice that if $2n_o+d\ge3,$ then by \eqref{u_f_d_odd} and \eqref{u_f_d_even}, the limit 
$$
\low_f(4d^2): = \lim\limits_{z\searrow 4d^2} \low_f(z) 
$$
exists, is finite and equal to $a_0$ if $d$ is odd and to $c_0$ if $d$ is even. In particular, 
if $2n_o+d=3,$ then 
\begin{equation}\label{asasda}
\low_f(z) = \low_f(4d^2) + \pi C_f\,\sqrt{z-4d^2} + \sum\limits_{k\ge 2} a_k^d(z-4d^2)^{k/2}  
\end{equation}
and if $2n_o+d=3,$ then 
\begin{equation}\label{daadfaf}
\low_f(z)  =  \low_f(4d^2) -(z-4d^2) C_f \ln(z-4d^2)  + \ln(z-4d^2) \sum\limits_{k\ge 2} b_k^d (z-4d^2)^k
+ \sum\limits_{k\ge1} c_k^d(z-4d^2)^k  
\end{equation}
for some $\{a_k^d\},\{b_k^d\},\{c_k^d\}\subset\R.$ 
Since all series in \eqref{asasda} and \eqref{daadfaf} locally uniformly converge in $(4d^2,4d^2+\gamma^2/4),$ we can differentiate them term-by-term; in particular, 
$$
C_f\,\lim\limits_{z\searrow 4d^2} \low_f'(z) =-\infty.
$$

{\it Case $2n_o+d\ge5.$} In this case by \eqref{u_f_d_odd} and \eqref{u_f_d_even} 
$$
\low_f'(4d^2):=\lim\limits_{z\to 4d^2} \frac{\low_f(z) - \low_f(4d^2)}{z-4d^2} = \begin{cases}
                                                                          a_1, & \text{$d$ odd,}\\
                                                                          b_1, & \text{$d$ even,}\\
                                                                         \end{cases}
$$
thus,
$$
\low_f(z) = \low_f(4d^2) + \low_f'(4d^2)\,(z-4d^2) + \sum\limits_{k\ge 3} a_k^d(z-4d^2)^{k/2}  
$$
if $2n_o+d$ is odd and 
$$
\low_f(z)  =  \low_f(4d^2) + \low_f'(4d^2)\,(z-4d^2) + \ln(z-4d^2) \sum\limits_{k\ge 2} b_k^d (z-4d^2)^k
+ \sum\limits_{k\ge2} c_k^d(z-4d^2)^k  
$$
if $2n_o+d$ is even,
where $\{a_k^d\},\{b_k^d\},\{c_k^d\}\subset\R.$ 

Now the choice $\gamma=\frac12$ completes the proof.
\end{proof}

\begin{remark}\label{rem:asymp_low_f}
When 
$$
|\hat v(x)| = O(|x|^{2n_0+d+1})\qquad \text{as $|x|\to \infty$}
$$
for some $n_0\ge1,$ 
one has $v\in C^{2n_0}(\T^d).$ In this case the Taylor series \eqref{f_anal_qator} and \eqref{f_ays_1}  of $f$ becomes only asymptotics of order $2n_0-1$ and thus, 
instead of expansions \eqref{l_f_asymp_d_odd}-\eqref{l_f_asymp_d_even} and   \eqref{asymp_u_f_d_odd}-\eqref{asymp_u_f_d_even} of $\low_f$ one has only asymptotics up to order $2n_0-1.$
\end{remark}

\subsection*{Acknowledgements} 

Sh. Kholmatov acknowledges support from the Austrian Science Fund (FWF) project
M 2571-N32.

\end{document}